\newtheoremstyle{new_plain}
	{}
	{}
	{\itshape}
	{}
	{\sffamily\bfseries}
	{.}
	{5pt plus1pt minus1pt\relax}
	{\thmnumber{#2. }\thmname{#1}\thmnote{ #3}}
\newtheoremstyle{nev_circle_definition}
	{}
	{}
	{\normalfont}
	{}
	{\sffamily\bfseries}
	{.}
	{5pt plus1pt minus1pt\relax}
	{\thmnumber{#2. }\thmname{#1}\thmnote{ #3}}
\theoremstyle{plain}
	\newtheorem{theorem}{Theorem}
	\newtheorem{lemma}[theorem]{Lemma}
	\newtheorem{proposition}[theorem]{Proposition}
	\newtheorem{conjecture}[theorem]{Conjecture}
\theoremstyle{definition}
	\newtheorem{remark}[theorem]{Remark}
\numberwithin{theorem}{section}
\numberwithin{equation}{section}
\title{\bfseries \Large Zeros of orthogonal polynomials near an algebraic singularity of the measure}
\author{\Large \'Arp\'ad Baricz\thanks{The research of \'A. Baricz was supported by the J\'anos Bolyai Research Scholarship of the Hungarian Academy of Sciences.}, Tivadar Danka\thanks{The research of T. Danka was supported by ERC Advanced Grant No. 267055 and by the \'UNKP-\'UNKP-16-3  New National Excellence Program of the Ministry of Human Capacities}}
\date{}
\begin{document}

\maketitle

\begin{abstract}
In this paper we study the local zero behavior of orthogonal polynomials around an algebraic singularity, that is, when the measure of orthogonality is supported on $ [-1,1] $ and behaves like $ h(x)|x - x_0|^\lambda dx $ for some $ x_0 \in (-1,1) $, where $ h(x) $ is strictly positive and analytic. We shall sharpen the theorem of Yoram Last and Barry Simon and show that the so-called fine zero spacing (which is known for  $ \lambda = 0$) unravels in the general case, and the asymptotic behavior of neighbouring zeros around the singularity can be described with the zeros of the function $ c J_{\frac{\lambda - 1}{2}}(x) + d J_{\frac{\lambda + 1}{2}}(x) $, where $ J_a(x) $ denotes the Bessel function of the first kind and order $ a $. Moreover, using Sturm-Liouville theory, we study the behavior of this linear combination of Bessel functions, thus providing estimates for the zeros in question.
\end{abstract}

\textbf{Keywords:} orthogonal polynomials, fine zero spacing, generalized Jacobi measure, Bessel function, Riemann-Hilbert method, \\
\indent \textbf{MSC:} 42C05, 33C10, 33C45

\section{Introduction}

Let $ \mu $ be a finite Borel measure supported on the real line with finite power moments. Then there is an unique sequence of polynomials
\[
	p_n(x) = \gamma_n x^n + \dots, \quad n = 0, 1, 2, \dots
\]
such that $ \gamma_n > 0 $ and the orthogonality relation
\[
	\int p_n(x) p_m(x) d\mu(x) = \delta_{n,m}, \quad n, m = 0, 1, 2, \dots
\]
holds. $ p_n(x) $ is called the $ n $-th orthonormal polynomial with respect to $ \mu $. The monic orthogonal polynomials are denoted with $ \pi_n(x) = \gamma_n^{-1}p_n(x) $. \\

The zeros of such orthogonal polynomials has been an intensively studied subject since the beginning of the 20th century, and several breakthroughs have been made in the recent years. Recently, Eli Levin and Doron Lubinsky proved in \cite{Levin-Lubinsky} that if $ \mu $ is a finite Borel measure supported on $ [-1,1] $ and $ x_0 $ is an arbitrary point in $ (-1,1) $, then if $ \mu $ is regular in the sense of Stahl and Totik (for the definition see \cite{Stahl-Totik}) and $ \mu $ is absolutely continuous in a small neighborhood $ (x_0 - \varepsilon, x_0 + \varepsilon) $ of some $ x_0 \in (-1,1) $ with
\[
	d\mu(x) = w(x) dx, \quad x \in (x_0 - \varepsilon, x_0 + \varepsilon)
\]
there for a strictly positive and continuous weight $ w(x) $, then
\begin{equation}\label{Levin-Lubinsky_result}
	\lim_{n \to \infty} n (x_{k+1,n}(x_0) - x_{k,n}(x_0)) = \pi \sqrt{1 - x_0^2}, \quad k \in \mathbb{Z}
\end{equation}
holds, where the $ x_{k,n}(x_0) $-s denote the zeros of $ p_n(x) $ in ascending order centered around $ x_0 $ as
\[
	\cdots < x_{-2,n}(x_0) < x_{-1,n}(x_0) \leq x_0 < x_{1,n}(x_0) < x_{2,n}(x_0) < \cdots,
\]
and we write $ x_{0,n}(x_0) = x_0 $ for convenience, which may or may not be a zero. This behavior was termed \textit{clock zero spacing} by Barry Simon. Theorems of this type in such generality, where only local continuity and Stahl-Totik regularity is assumed, has been unprecedented. This was made possible by the universality results of Lubinsky \cite{Lubinsky_1} regarding the scaling limit of the Christoffel-Darboux kernel around $ x_0 $. The universality result and fine zero spacing were subsequently extended by Simon in \cite{Simon} and Totik in \cite{Totik_1} to measures supported on more general sets, although they used very different methods. In a slightly different setting, studying doubling measures on $ [-1,1] $, Mastroianni and Totik also proved in \cite{Mastroianni-Totik} that if $ \mu $ is doubling then there is a constant $ C $ independent of $ n $ such that
\begin{equation}\label{Mastroianni-Totik_spacing}
	C^{-1} \leq \frac{x_{k+1, n} - x_{k,n}}{\Delta_n(x_{k,n})} \leq C, \quad k = 0, 1, \ldots, n
\end{equation}
holds, where $ \Delta_n(t) = n^{-1} \sqrt{1 - t^2}  + n^{-2} $ and $ -1 < x_{1,n} < x_{2,n} < \cdots < x_{n,n} < 1 $ denotes the zeros of $ p_n(x) $ in ascending order. They also managed to show that in some sense, the converse is true, i.e. if (\ref{Mastroianni-Totik_spacing}) holds and the neighbouring Cotes numbers are bounded away from zero and infinity, then the measure is doubling on $ [-1,1] $. It was observed in \cite{Varga} by Varga that the results of Mastroianni and Totik also hold true locally, that is if we only assume the doubling property in a neighbourhood of some point in the support. \\

If, however, some explicit singular behavior of the weight is assumed, not much is known about the fine zero spacing. For weights with a jump singularity, A. Foulqui\'e Moreno, A. Mart\'inez-Finkelshtein and V. L. Sousa proved in \cite[Proposition 9]{Moreno-Finkelshtein-Sousa} that clock zero spacing does not hold around the jump, but besides this negative result, no precise assertions were made. For algebraic singularity of type $ |x - x_0|^\lambda $, Yoram Last and Barry Simon proved, see \cite[Theorem 8.5]{Last-Simon}, that if $ \mu $ is absolutely continuous in a neighbourhood of $ x_0 $ with
\[
	0 < \liminf_{x \to x_0} \frac{\mu^\prime(x)}{|x - x_0|^\lambda} \leq \limsup_{x \to x_0} \frac{\mu^\prime(x)}{|x - x_0|^\lambda} < \infty
\]
there for some $ \lambda > -1 $ there, then
\begin{equation}\label{Last-Simon_result}
	\limsup_{n \to \infty} n |x_{1,n}(x_0) - x_{-1,n}(x_0)| < \infty,
\end{equation}
but otherwise no more information about the zero spacing is known. This result was significantly sharpened by Varga in \cite{Varga}, where he assumed local doubling property in a neighbourhood of $ x_0 $, but it is still unknown what happens precisely when the weight has an algebraic singularity $ |x - x_0|^{\lambda} $ there. \\

The aim of this paper is to describe the zero spacing near the algebraic singularity in a sharper way. The zeros around the singularities are described with the aid of Bessel functions of the first kind. For each $ c, d \in \mathbb{R} $ and $ a > -1 $, the solutions of the equation
\begin{equation}\label{Bessel_equation}
	c x^{-a}J_{a}(x) + d x^{-a} J_{a+1}(x) = 0
\end{equation}
for $ x \in \mathbb{R} \setminus \{ 0 \} $ are denoted with
\begin{equation}\label{Bessel_equation_solutions}
	\cdots < j_{-2}(a, c, d) < j_{-1}(a, c, d) \leq 0 < j_1(a, c, d) < j_2(a, c, d) < \cdots,
\end{equation}
and we write $ j_0(a, c, d) = 0 $ for convenience, which may or may not be a solution. Note that since $ J_a(z) = z^a G_a(z) $, where $ G_a(z) $ is an entire function, (\ref{Bessel_equation}) makes sense for negative arguments. Since the Bessel functions are continuous and the positive zeros of $ J_{a}(x) $ and $ J_{a + 1}(x) $ interlace in a way such that the $ k $-th zero of $ J_{a}(x) $ is always smaller than the $ k $-th zero of $ J_{a + 1}(x) $ (for this fact see \cite[15.22]{Watson}), we can indeed write (\ref{Bessel_equation_solutions}). Note that $ j_k(a, 0, 1) $ is just the $ k $-th zero of $ x^{-a} J_{a+1}(x) $ and $ j_k(a, 1, 0) $ is the $ k $-th zero of $ x^{-a} J_{a}(x) $. Our main theorem is the following.

\begin{theorem}\label{main_theorem_1}
Let $ \mu $ be a finite Borel measure supported on $ [-1,1] $ defined by
\begin{equation}\label{mu_definition}
	d\mu(x) = h(x) (1-x)^{\alpha}(1 + x)^{\beta} \prod_{\nu = 1}^{n_0} |x - x_\nu|^{\lambda_\nu} dx, \quad x \in [-1,1],
\end{equation}
where $ h $ is a positive analytic function and $ \alpha, \beta, \lambda_1, \dots, \lambda_{n_0} > -1 $. Let
\begin{equation}\label{centered_zeros}
	\cdots < x_{-2,n}(x_\nu) < x_{-1,n}(x_\nu) \leq x_\nu < x_{1,n}(x_\nu) < x_{2,n}(x_\nu) < \cdots
\end{equation}
be the zeros of the $ n $-th orthonormal polynomial (with the additional notation $ x_{0,n}(x_\nu) = x_\nu $, which may or may not be a zero) centered around the algebraic singularity $ x_\nu $, which is located in the interior of $ [-1,1] $, i.e. $ x_\nu \in (-1,1) $. \\

(a) Let $ k \in \mathbb{Z} $ be fixed. If $ \arccos x_\nu $ is a rational multiple of $ \pi $, say $ \arccos x_\nu = \pi \frac{p}{q} $ where $ \gcd(p,q) = 1 $, then there are $ q $ distinct constants $ c_0, \dots, c_{q-1} \in \mathbb{R} $ and $ q $ distinct constants $ d_0, \dots, d_{q-1} \in \mathbb{R} $ such that for $ n_l = l q + m $, $ m = 0, 1, \dots, q-1 $, we have
\[
	\lim_{l \to \infty} \frac{n_l}{\sqrt{1 - x_\nu^2}} (x_{k+1,n_l}(x_\nu) - x_{k,n_l}(x_\nu)) = j_{k+1}\Big(\frac{\lambda_\nu-1}{2}, c_m, d_m\Big) - j_{k}\Big(\frac{\lambda_\nu-1}{2}, c_m, d_m\Big),
\]
where $ j_k(a,c,d) $ denotes the zeros of \eqref{Bessel_equation} ordered as \eqref{Bessel_equation_solutions}. Moreover, the constants $ c_l, d_l $ for two distinct $ k_0, k_1 \in \mathbb{Z} $ are equal, if the sign of $ k_0 $ and $ k_1 $ equals. \\

(b) If $ \arccos x_\nu $ is not a rational multiple of $ \pi $, then for any fixed $ k \in \mathbb{Z} $ and any constants $ c, d \in \mathbb{R} $ there is a subsequence $ n_l $ such that
\[
	\lim_{l \to \infty} \frac{n_l}{\sqrt{1 - x_\nu^2}}(x_{k+1,n_l}(x_\nu) - x_{k,n_l}(x_\nu)) = j_{k + 1}\Big(\frac{\lambda_\nu-1}{2}, c, d\Big) - j_{k}\Big(\frac{\lambda_\nu-1}{2}, c, d\Big).
\]
\end{theorem}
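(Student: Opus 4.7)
The plan is to obtain a precise scaling limit for $p_n$ in a neighbourhood of the algebraic singularity $x_\nu$ via the Deift-Zhou Riemann-Hilbert (RH) steepest descent method, and then extract the zero asymptotics by analysing how that scaling limit depends on $n$ through the single oscillating phase $n\arccos x_\nu$. First I would set up the Fokas-Its-Kitaev $2\times 2$ RH problem for $p_n$ associated with the weight
\[
w(x) = h(x)(1-x)^{\alpha}(1+x)^{\beta}\prod_{\nu=1}^{n_0} |x-x_\nu|^{\lambda_\nu}.
\]
Since $h$ is analytic in a neighbourhood of $[-1,1]$ and the singular factors are of algebraic Fisher-Hartwig type, $w$ fits the framework developed by Kuijlaars--McLaughlin--Van Assche--Vanlessen for modified Jacobi weights. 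The usual chain of transformations (normalisation at infinity with the $g$-function of the arcsine equilibrium measure, opening of lenses off the real axis, outer parametrix built from the Szeg\H{o} function $D_w$) reduces the problem to a small-norm RH problem away from the singular set, together with Bessel parametrices near $\pm 1$ (of indices $\alpha,\beta$) and near each $x_\nu$ (of index $\lambda_\nu$).

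Second, the local model at $x_\nu$ is the Bessel parametrix built from $J_{(\lambda_\nu\pm 1)/2}$. Matching it to the outer parametrix and tracing back through the transformations yields, on the scale $s = n(x-x_\nu)/\sqrt{1-x_\nu^2}$, an asymptotic expansion of the form
\[
    p_n\!\left( x_\nu + \tfrac{s\sqrt{1-x_\nu^2}}{n} \right) = N_n(s)\,\bigl[\, A_n^{\pm}\, s^{-a} J_a(s) + B_n^{\pm}\, s^{-a} J_{a+1}(s)\, \bigr]\bigl(1+o(1)\bigr), \qquad a = \tfrac{\lambda_\nu-1}{2},
\]
uniformly on compact subsets of $\mathbb{R}\setminus\{0\}$, with a non-vanishing prefactor $N_n(s)$ and coefficients
\[
    A_n^{\pm} = \alpha_1^{\pm}\cos\bigl(n\arccos x_\nu + \varphi_0\bigr) + \alpha_2^{\pm}\sin\bigl(n\arccos x_\nu + \varphi_0\bigr),
\]
and analogously for $B_n^{\pm}$, with real amplitudes and a fixed phase $\varphi_0$ determined explicitly by the boundary values of $D_w$ at $x_\nu$, by $h(x_\nu)$ and the factors $|x_\nu-x_{\nu'}|^{\lambda_{\nu'}}$ for $\nu'\neq\nu$, and by the matching matrix of the Bessel parametrix. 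The superscript $\pm$ records that these coefficients are different on the two sides of $x_\nu$, which is what produces the same-sign clause in part (a): zeros with $k>0$ live to the right of $x_\nu$, those with $k<0$ to the left, and each half uses its own pair $(A_n^{\pm},B_n^{\pm})$.

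Third, the dichotomy in the statement is driven by the sequence $\theta_n := n\arccos x_\nu \bmod 2\pi$. In the rational case $\arccos x_\nu = \pi p/q$ with $\gcd(p,q)=1$, $\theta_n$ depends only on $n \bmod q$, so along $n_l = lq+m$ the coefficients $(A_{n_l}^{\pm}, B_{n_l}^{\pm})$ are exact constants $(c_m^{\pm}, d_m^{\pm})$; distinctness of the $q$ pairs reduces to non-degeneracy of the amplitude matrix $(\alpha_j^{\pm})$, which must be read off the explicit Bessel parametrix. In the irrational case, $\{\theta_n/(2\pi)\}$ is equidistributed mod $1$ by Weyl's theorem, so for any prescribed $(c,d)\in \mathbb{R}^2$ a subsequence $n_l$ can be extracted along which $(A_{n_l}^{\pm}, B_{n_l}^{\pm}) \to (c,d)$.

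Finally, a Hurwitz-type argument converts convergence of rescaled functions into convergence of zeros. Because the Wronskian of $J_a$ and $J_{a+1}$ is nonzero, the limit $c\,s^{-a}J_a(s) + d\,s^{-a}J_{a+1}(s)$ has only simple zeros unless $c=d=0$; combined with the interlacing property quoted in the introduction, this shows that the rescaled zeros $s_{k,n_l}:=n_l(x_{k,n_l}(x_\nu)-x_\nu)/\sqrt{1-x_\nu^2}$ converge, in the natural ordering, to $j_k\bigl(\tfrac{\lambda_\nu-1}{2}, c_m^{\pm}, d_m^{\pm}\bigr)$ in case (a), respectively to $j_k\bigl(\tfrac{\lambda_\nu-1}{2}, c, d\bigr)$ in case (b). Differencing consecutive terms gives the stated spacing formulas. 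The main obstacle I anticipate is the careful bookkeeping inside the RH analysis needed to pin down the explicit amplitudes and phase $\varphi_0$ — in particular verifying the non-degeneracy of the amplitude matrix that makes the $q$ constants in part (a) genuinely distinct, and correctly incorporating the analytic prefactors coming from $h$ and from the other singular factors into the coefficients of the Bessel linear combination.
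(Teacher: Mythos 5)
Your route is essentially the paper's: the Fokas--Its--Kitaev Riemann--Hilbert problem for the generalized Jacobi weight, steepest descent following Vanlessen and Kuijlaars--McLaughlin--Van Assche--Vanlessen, a Bessel parametrix of orders $\frac{\lambda_\nu\pm1}{2}$ at $x_\nu$, a scaling limit whose coefficients oscillate only through the phase $n\arccos x_\nu$ (with different phases on the two sides of $x_\nu$, which is exactly how the same-sign clause in (a) arises), and then the rational/irrational dichotomy for $\arccos x_\nu/\pi$, with a normalization $c^2+d^2=1$ implicit in case (b). This is the proof in Sections 2--3 of the paper.

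There is, however, a genuine gap in the step that converts convergence of the rescaled functions into convergence of the individually indexed zeros. You assert that $c\,s^{-a}J_a(s)+d\,s^{-a}J_{a+1}(s)$ has only simple zeros ``because the Wronskian of $J_a$ and $J_{a+1}$ is nonzero''; this argument does not apply, since $J_a$ and $J_{a+1}$ are not two solutions of one second-order equation, and non-vanishing of a Wronskian-type expression would in any case only preclude common zeros, not multiple zeros of the combination. The claim is true but not automatic: writing $\psi_{a,c,d}$ via the recurrence as $(ad+cx)J_a(x)-dxJ_a'(x)$ and using Bessel's equation, one finds that a double zero is a priori only excluded away from the point $x=-(2a+1)cd/(c^2+d^2)$, where the leading coefficient of the ODE satisfied by $\psi_{a,c,d}$ vanishes. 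The paper devotes Lemma 3.3 to this, proving both reality and simplicity of all zeros via Ismail--Muldoon's theorem (suitably extended to the boundary case) together with a Hadamard-factorization/Laguerre--P\'olya interlacing argument; reality matters too, since the index correspondence would break if the limit function had non-real zeros attracting (necessarily real) zeros of $\pi_n$. Relatedly, your Hurwitz argument requires locally uniform convergence on complex neighbourhoods of the rescaled interval (which the RH analysis can supply but which you would need to state), or some substitute: the paper instead controls the zero count by invoking Varga's spacing bounds for locally doubling measures, which prevent the additive $O(n^{-1})$ error from creating new scaled zeros or letting two scaled zeros coalesce. With Lemma 3.3-type input and one of these counting devices made explicit, your outline matches the published proof.
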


Since $ J_{\frac{1}{2}}(z) = \sqrt{\frac{2}{\pi z}} \sin(z) $ and $ J_{-\frac{1}{2}}(z) = \sqrt{\frac{2}{\pi z}} \cos(z) $, in the special case $ \lambda_\nu = 0 $, Theorem \ref{main_theorem_1} gives back the result of Levin and Lubinsky. Theorem \ref{main_theorem_1} not only says that the fine zero spacing fails spectacularly, it also makes very precise assertions about the limiting behavior of zeros. As far as we are aware, there are no similar results known in the literature. Theorem \ref{main_theorem_1} provides an explicit example where fine zero spacing like (\ref{Levin-Lubinsky_result}) unravels, moreover it also shows that the quantitative behavior of the zeros depend heavily on the location of the algebraic singularity $ x_\nu $, which is also a newly observed phenomenon. Although the proof of Theorem \ref{main_theorem_1} works only when $ \mu $ is supported on $ [-1,1] $ and defined by (\ref{mu_definition}), we conjecture that the following holds in more general situations.

\begin{conjecture}\label{main_conjecture}
Let $ \mu $ be a finite Borel measure supported on a compact subset $ K $ of the real line and assume that $ \mu $ is regular in the sense of Stahl and Totik on $ K $. Let $ x_0 \in \operatorname{int}(K) $ and suppose that $ \mu $ is absolutely continuous in a neighbourhood $ (x_0 - \varepsilon, x_0 + \varepsilon) $ of $ x_0 $ and
\[
	d\mu(x) = w(x) |x - x_0|^\lambda dx, \quad x \in (x_0 - \varepsilon, x_0 + \varepsilon)
\]
holds there for some $ \lambda > -1 $ and a strictly positive and continuous weight $ w(x) $. \\

(a) Let $ k \in \mathbb{Z} $ be fixed. If $ \nu_K([x_0, \infty)) $ is a rational number, where $ \nu_K $ denotes the equilibrium measure of $ K $, say $ \nu_K([x_0, \infty)) = \frac{p}{q} $, where $ \gcd(p,q) = 1 $, then there are $ q $ distinct constants $ c_0, \dots, c_{q-1} \in \mathbb{R} $  and $ q $ distinct constants $ d_0, \dots, d_{q-1} \in \mathbb{R} $ such that for $ n_l = lq+m $, $ m = 0, 1, \dots, q-1 $, we have
\[
	\lim_{l \to \infty} n_l \pi \omega_K(x_0)(x_{k+1,n_l}(x_0) - x_{k,n_l}(x_0)) = j_{k+1}\Big(\frac{\lambda-1}{2}, c_m, d_m\Big) -  j_{k}\Big(\frac{\lambda-1}{2}, c_m, d_m\Big),
\]
where $ \omega_K(x) $ denotes the equilibrium density of $ K $, and $ j_k(a,c,d) $ denotes the zeros of \eqref{Bessel_equation} ordered as \eqref{Bessel_equation_solutions}. Moreover, the constants $ c_l, d_l $ for two distinct $ k_0, k_1 \in \mathbb{Z} $ are equal, if the sign of $ k_0 $ and $ k_1 $ equals. \\

(b) If $ \nu_K([x_0, \infty)) $ is not a rational number, then for any $ c, d \in \mathbb{R} $, there is a subsequence $ n_l $ such that
\[
	\lim_{l \to \infty} n_l \pi \omega_K(x_0) (x_{k+1,n_l}(x_0) - x_{k,n_l}(x_0)) = j_{k+1}\Big(\frac{\lambda-1}{2}, c, d\Big) -  j_{k}\Big(\frac{\lambda-1}{2}, c, d\Big).
\]
\end{conjecture}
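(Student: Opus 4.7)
The plan is to reduce Conjecture \ref{main_conjecture} to Theorem \ref{main_theorem_1} via Totik's polynomial inverse image approximation combined with a stability argument for the local Bessel-type asymptotic of $p_n$ near the algebraic singularity. The central dimensional identity is that if $T$ is a polynomial of degree $N$ with $T^{-1}([-1,1]) \supseteq K$, then the equilibrium densities satisfy $\pi \omega_K(x) = |T'(x)|/(N \sqrt{1 - T(x)^2})$, which converts the scaling $n_l \pi \omega_K(x_0)$ on $K$ into precisely the $[-1,1]$-scaling of Theorem \ref{main_theorem_1}. The phase governing the oscillation of $p_n(x)$ near $x_0$ is $2 \pi n \nu_K([x_0,\infty))$, whose fractional part modulo $1$ cycles through exactly $q$ distinct values when $\nu_K([x_0,\infty)) = p/q$ in lowest terms, and is equidistributed in the irrational case; this is the arithmetic source of part (a) and part (b) respectively.

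First I would relax the analyticity hypothesis on $h$ in Theorem \ref{main_theorem_1} to continuity, by sandwiching $h$ between analytic perturbations $h^{\pm}$ with $h^{+}/h^{-}$ arbitrarily close to $1$ near $x_\nu$, using monotonicity of the Christoffel function together with a normal family extraction for the scaled polynomials $p_n(x_\nu + s/(n \pi \omega_{[-1,1]}(x_\nu)))$, in the style of Simon and Totik. Next, assuming $K = T^{-1}([-1,1])$ for an admissible polynomial $T$ with $T'(x_0) \ne 0$, I would push $\mu$ forward under $T$ to obtain a measure on $[-1,1]$ of generalized Jacobi type with algebraic singularities at the $N$ points of $T^{-1}(T(x_0))$, of exponent $\lambda$ at $x_0$ and prescribed positive exponents at the other preimages; Theorem \ref{main_theorem_1} in its multi-singularity form then applies. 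The orthogonal polynomials on $K$ factor through $T$ modulo controlled corrections, and the local asymptotic at $x_0$ inherits the Bessel limit of the image at $T(x_0)$, rescaled by $|T'(x_0)|$. For general regular $K$, approximate by polynomial inverse images $K_m \supseteq K$ containing a common interval around $x_0$, and pass to the limit using Hausdorff-continuity of $\omega_K(x_0)$ and $\nu_K([x_0,\infty))$ under this approximation, which is part of Totik's framework.

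The principal obstacle is obtaining a Hausdorff-stable, quantitative version of the universality of the local scaling limit of $p_n$ itself, not merely of the Christoffel-Darboux kernel (whose universality near algebraic singularities is already well understood): the Bessel combination with coefficients $(c_m, d_m)$ encodes the exact phase $2 \pi n \nu_K([x_0,\infty)) \bmod 1$, and this phase must survive the approximation $K_m \to K$ uniformly along the subsequence $n_l = l q + m$. A secondary issue is the case $T'(x_0) = 0$, in which the local parametrix branches and the effective Bessel order changes; this case can either be excluded by a genericity argument on the approximating polynomials or handled by a separate Riemann-Hilbert analysis at the branch point.
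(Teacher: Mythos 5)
You should first note that the paper does not prove this statement at all: it is explicitly labelled a conjecture, and the authors state that their Riemann--Hilbert proof of Theorem \ref{main_theorem_1} works only for measures of the form \eqref{mu_definition} on $[-1,1]$. So there is no proof in the paper to compare against, and your text must be judged as a standalone argument. As such, it is a research programme rather than a proof: each of its three main steps is precisely one of the open difficulties, and none of them is actually carried out.

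Concretely: (1) Relaxing analyticity of $h$ to continuity by sandwiching $h^{-}\le h\le h^{+}$ and using monotonicity of the Christoffel function works for the Christoffel--Darboux kernel, because $\lambda_n(x,\mu)$ is a monotone functional of $\mu$ and the kernel limit is phase-free; but the quantity you need is the strong pointwise asymptotics of $p_n$ itself, including the phase $n\arccos x_\nu+\varphi_\nu$ that produces the constants $(c_m,d_m)$. The polynomials are not monotone in the measure, and a normal-family extraction for the scaled $p_n$ gives subsequential limits without identifying the phase, so this step does not deliver what part (a) requires. (2) The push-forward of $\mu$ under $T$ is a sum over the $N$ branches of $T^{-1}$; near $T(x_0)$ the branch through $x_0$ contributes a $|y-T(x_0)|^{(\lambda\pm\text{something})}$ factor while the other branches contribute strictly positive smooth densities, so the image measure is of the form $h_1(y)|y-y_0|^{\lambda'}+h_2(y)$ and \emph{not} of the generalized Jacobi form $h(y)|y-y_0|^{\lambda}$ with $h$ positive and analytic demanded by Theorem \ref{main_theorem_1}; moreover orthogonal polynomials on $K$ relate to those of a measure on $[-1,1]$ only for degrees divisible by $\deg T$ (Geronimo--Van Assche type identities hold for pulled-back measures, not for arbitrary regular $\mu$ on $K$), and the remaining residue classes of $n$ are exactly where the phase information of parts (a) and (b) lives. (3) Passing from $K_m=T_m^{-1}([-1,1])$ to general $K$ by Hausdorff continuity of $\omega_K(x_0)$ and $\nu_K([x_0,\infty))$ cannot work at the level you need: the relevant quantity is $n\,\nu_K([x_0,\infty))$ modulo $1$, so an approximation error $\varepsilon_m$ in the equilibrium measure is amplified by $n\to\infty$, and without a quantitative rate tied to $n$ the phase does not survive the limit. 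You do flag (1) and (3) as the ``principal obstacle,'' but acknowledging the obstacle is not overcoming it; as written, the proposal establishes neither part (a) nor part (b), and the conjecture remains open.
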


For the notion of Stahl-Totik regularity see \cite{Stahl-Totik}, and for the notion of equilibrium measure and other potential theoretic concepts, see the book \cite{Ransford}. Theorem \ref{main_theorem_1} is a special case of Conjecture \ref{main_conjecture}, since the equilibrium density of the interval $ [-1,1] $ is $ \omega_{[-1,1]}(x) = \pi^{-1} (1 - x^2)^{-\frac{1}{2}} $, which is the well known Chebyshev distribution. \\

We also study the zeros of the special function
\begin{equation}\label{psi_acd}
	\psi_{a,c,d}(x) = c J_a(x) + d J_{a+1}(x), \quad a > -1, \quad c,d \in \mathbb{R}.
\end{equation}
The roots of the equation $ \psi_{a,c,d}(x) = 0 $ coincide with $ j_k(a,c,d) $ for positive $ k $. Using Sturm-Liouville theory, we managed to obtain estimates of neighbouring zeros.

\begin{theorem}\label{zero_convexity_theorem} Suppose that $ c $ and $ d $ are nonzero real numbers. \\
(i) If $ a \geq \frac{1}{2} $ and $ c $ has the same sign as $ d $, then
\begin{equation}\label{zero_convexity_1}
	j_{k+2}(a,c,d) - j_{k+1}(a,c,d) < j_{k+1}(a,c,d) - j_{k}(a,c,d),
\end{equation}
for all $ k \geq 1 $. \\
(ii) If $ 0 < a < \frac{1}{2} $ and $ c $ has the same sign as $ d $, then
\begin{equation}\label{zero_convexity_2}
	\frac{\left(1+\frac{\sigma}{j_{k+2}(a,c,d)}\right)\left(1+\frac{\sigma}{j_{k}(a,c,d)}\right)}{\left(1+\frac{\sigma}{j_{k+1}(a,c,d)}\right)^2}
>e^{\sigma\Delta^2 j_k(a,c,d)-\sigma^2\Delta^2 j_k^2(a,c,d)},
\end{equation}
for all $ k \geq 1 $, where
\[
	\sigma=\frac{(2a+1)cd}{c^2+d^2}
\]
and
\[
	\Delta^2j_k(a,c,d)=\frac{1}{j_{k+2}(a,c,d)}+\frac{1}{j_{k}(a,c,d)}-\frac{2}{j_{k+1}(a,c,d)}.
\]
(iii) If $ -\frac{1}{2} < a \leq 0 $ and $ c $ has the same sign as $ d $, then
\begin{equation}\label{zero_convexity_3}
	\frac{\left(1+\frac{\sigma}{j_{k+2}(a,c,d)}\right)\left(1+\frac{\sigma}{j_{k}(a,c,d)}\right)}{\left(1+\frac{\sigma}{j_{k+1}(a,c,d)}\right)^2}
<e^{\sigma\Delta^2 j_k(a,c,d)},
\end{equation}
for all $ k \geq 1 $, where $ \sigma $ and $ \Delta^2j_k(a,c,d) $ are defined as before. \\
(iv) If $ -1 < a < -\frac{1}{2} $, $ d^2 \geq c^2 $ and $ c $ has the opposite sign as $ d $, then
\begin{equation}\label{zero_convexity_4}
	\frac{(\sigma+j_{k+2}(a,c,d))(\sigma+j_{k}(a,c,d))}{(\sigma+j_{k+1}(a,c,d))^2}<e^{\frac{1}{\sigma}\delta^2j_{k}(a,c,d)},
\end{equation}
where
\[
	\delta^2 j_k(a,c,d)={j_{k+2}(a,c,d)}-2{j_{k+1}(a,c,d)}+j_k(a,c,d),
\]
and $ \sigma $ is as defined before.
\end{theorem}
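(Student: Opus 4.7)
The plan is to prove all four inequalities via a Sturm--Liouville analysis of the second-order ODE satisfied by $\psi_{a,c,d}$, reduced to a normal-form Schr\"odinger equation by a Liouville transformation. Using the identity $\frac{d}{dx}[x^{-a}J_a(x)]=-x^{-a}J_{a+1}(x)$, observe that $x^{-a}\psi_{a,c,d}(x)=c\,h(x)-d\,h'(x)$, where $h(x)=x^{-a}J_a(x)$ satisfies $h''+\frac{2a+1}{x}h'+h=0$. A routine elimination of $h$ and $h'$ then shows that $g(x):=x^{-a}\psi_{a,c,d}(x)$ solves
\[
	g''+\left(\frac{2a+2}{x}-\frac{1}{x+\sigma}\right)g'+\left(1-\frac{(2a+1)\,d^{2}}{(c^{2}+d^{2})\,x(x+\sigma)}\right)g=0,
\]
with $\sigma=(2a+1)cd/(c^{2}+d^{2})$, which is strictly positive in each of the four parameter regimes of the theorem.

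Next apply the Liouville substitution $y(x):=\frac{x}{\sqrt{x+\sigma}}\,\psi_{a,c,d}(x)$ (equivalently $g(x)=x^{-(a+1)}(x+\sigma)^{1/2}y(x)$), which is strictly positive on $(0,\infty)$ and therefore preserves the positive zeros. A short computation yields the normal form
\[
	y''(x)+Q(x)\,y(x)=0,\qquad Q(x)=1-\frac{a(a+1)}{x^{2}}+\frac{\tau_{1}-a}{x(x+\sigma)}-\frac{3}{4(x+\sigma)^{2}},
\]
where $\tau_{1}=(2a+1)c^{2}/(c^{2}+d^{2})$. The entire subsequent analysis rests on this explicit formula for $Q$.

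For part (i), where $a\ge 1/2$ and $cd>0$, a direct differentiation of $Q$ shows $Q'(x)>0$ on $(0,\infty)$: the hypothesis $a\ge 1/2$ forces $a(a+1)\ge 3/4$, so the two positive contributions $2a(a+1)/x^{3}$ and $3/[2(x+\sigma)^{3}]$ dominate the middle term $-(\tau_{1}-a)(2x+\sigma)/[x(x+\sigma)]^{2}$ uniformly in $x>0$, regardless of the sign of $\tau_{1}-a$. Sturm's convexity theorem applied to $y''+Qy=0$ then yields $j_{k+2}-j_{k+1}<j_{k+1}-j_k$ for $k\ge 1$, which is exactly \eqref{zero_convexity_1}.

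In parts (ii)--(iv), the monotonicity of $Q$ fails, and the three singular terms of $Q$ compete in a way that depends sensitively on the placement of $a$ relative to $\pm 1/2$ and on the sign of $cd$. Here the argument switches from the convexity theorem to the Sturm--Picone comparison identity, applied to $y''+Qy=0$ and integrated between consecutive zeros of $y$; telescoping the comparisons on $[j_{k},j_{k+1}]$ and $[j_{k+1},j_{k+2}]$ and subtracting produces a second-difference inequality whose natural building blocks are the primitives of the singular parts of $Q$, namely $\log(1+\sigma/x)$, $1/x$ and $1/x^{2}$ for (ii) and (iii), and $\log(\sigma+x)$ and $x$ for (iv) (corresponding to the dual choice of primitive that is more convenient when $|\sigma|$ is large compared to the zeros, as happens under the hypothesis $d^{2}\ge c^{2}$). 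The direction of the resulting inequality and the precise exponential remainder are dictated by the signs of $a(a+1)-3/4$, $\tau_{1}-a$ and, in (iv), $d^{2}-c^{2}$, which flip across the boundaries $a=\pm 1/2$ and $|c|=|d|$. The main technical obstacle is this delicate case analysis: in particular, in part (iv) the combination $-1<a<-1/2$, $cd<0$ and $d^{2}\ge c^{2}$ is precisely what forces the correct interaction between the $1/[x(x+\sigma)]$ and $1/(x+\sigma)^{2}$ contributions of $Q$, so that the Picone comparison points in the direction of \eqref{zero_convexity_4}.
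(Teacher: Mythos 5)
Your normal-form reduction is correct, and part (i) is essentially the paper's own argument: since $v(x)/u(x)=\frac{2}{x}-\frac{1}{x+\sigma}$, your substitution $y=\frac{x}{\sqrt{x+\sigma}}\,\psi_{a,c,d}$ is exactly the paper's passage to the normal form $z''+Cz=0$, so your $Q$ is the paper's potential $C$ written in partial fractions. Your monotonicity claim for $Q$, though justified only loosely, is true: with $t=\tau_1-a<a+1$ one computes
\[
x^3(x+\sigma)^3Q'(x)=\Bigl(2a(a+1)+\tfrac32-2t\Bigr)x^3+3\sigma\bigl(2a(a+1)-t\bigr)x^2+\sigma^2\bigl(6a(a+1)-t\bigr)x+2a(a+1)\sigma^3,
\]
and for $a\ge\tfrac12$, $cd>0$ all four coefficients are positive (the leading one exceeds $2a^2-\tfrac12\ge0$), so $Q$ is increasing on $(0,\infty)$ and Sturm convexity gives \eqref{zero_convexity_1}.

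Parts (ii)--(iv), however, are not proved; what you give there is an unexecuted plan. Invoking a Sturm--Picone identity integrated over $[j_k,j_{k+1}]$ and $[j_{k+1},j_{k+2}]$ and ``telescoped'', with the direction of the inequality ``dictated by the signs'' of various quantities, names neither a comparison function nor a single verified sign, and it is not the mechanism that produces the exact right-hand sides of \eqref{zero_convexity_2}--\eqref{zero_convexity_4}. The missing device is a change of the \emph{independent} variable inside the Liouville transformation: one chooses $z'(x)=x^p\bigl((2a+1)cd+(c^2+d^2)x\bigr)^q$, concretely $z'=(xu)^{-1}$ for (ii), $z'=u^{-1}$ for (iii) and $z'=x^3u^{-1}$ for (iv), where $u(x)=(2a+1)cdx^2+(c^2+d^2)x^3$, so that the transformed potential $\Omega$ becomes a polynomial (a Laurent polynomial in (iv)) whose finitely many coefficients all have one sign exactly under the stated hypotheses on $a$, the sign of $cd$, and $d^2\ge c^2$; monotone $\Omega$ plus the same Sturm convexity theorem then yields convexity or concavity of the sequence $z(j_k(a,c,d))$, and the closed form of $z$ (combinations of $\ln\bigl(c^2+d^2+(2a+1)cd/x\bigr)$, $1/x$ and $1/x^2$ in (ii)--(iii), of $x$ and $\ln(\sigma+x)$ in (iv)) is what generates the exponential bounds --- for instance the term $-\sigma^2\Delta^2j_k^2(a,c,d)$ in \eqref{zero_convexity_2} comes precisely from the $1/x^2$ piece of $z$. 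Without selecting these weights and carrying out the coefficient-sign checks, the ``delicate case analysis'' you defer is the entire content of (ii)--(iv). Finally, your heuristic in (iv) that $d^2\ge c^2$ makes $|\sigma|$ large relative to the zeros is false: there $2a+1\in(-1,0)$ and $|cd|\le\tfrac12(c^2+d^2)$, so $0<\sigma<\tfrac12$.
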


\begin{theorem}\label{zero_comparison_theorem}
Suppose that $ c $ and $ d $ are nonzero real numbers. \\
(i) If $ a \geq \frac{1}{2} $ and $ c $ has the same sign as $ d $, then
\begin{equation}
	 j_{k+1}(a,c,d) - j_{k}(a,c,d) > \pi
\end{equation}
for all $ k \geq 1 $. \\
(ii) If $a\geq0,$ $d^2\geq c^2,$ and $c$ has the same sign as $d,$ then
\begin{equation}\label{zero_comparison_case_2}
	\sigma\ln\frac{j_{k+1}(a,c,d)}{j_k(a,c,d)}+{j_{k+1}(a,c,d)}-{j_k(a,c,d)}>\pi
\end{equation}
for all $k\geq1.$ \\
(iii) If $ -1 < a < -\frac{1}{2} $, $ d^2 \geq c^2 $ and $ c $ has the opposite sign as $ d $, then
\begin{equation}\label{zero_comparison_case_3}
	j_{k+1}(a,c,d)-j_{k}(a,c,d)-\sigma\ln\frac{\sigma+j_{k+1}(a,c,d)}{\sigma+j_{k}(a,c,d)}<\pi.
\end{equation}
\end{theorem}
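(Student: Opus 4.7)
The plan is to invoke Sturm's classical comparison theorem after bringing $\psi=\psi_{a,c,d}$ into a suitable normal form. Combining Bessel's equation for $J_a$ and $J_{a+1}$ yields
\[
	x^2\psi''+x\psi'+(x^2-a^2)\psi=(2a+1)d\,J_{a+1}(x),
\]
and inverting the linear system $\psi=cJ_a+dJ_{a+1}$, $\psi'=(d+ac/x)J_a-(c+(a+1)d/x)J_{a+1}$ (whose determinant is $-(c^2+d^2)(x+\sigma)/x$) expresses $J_{a+1}$ in terms of $\psi$ and $\psi'$. Substituting back produces the second-order ODE
\[
	\psi''+\frac{x+2\sigma}{x(x+\sigma)}\psi'+\left[1-\frac{\beta x+a(a+1)\sigma}{x^2(x+\sigma)}\right]\psi=0,\qquad \beta:=\frac{a^2c^2+(a+1)^2d^2}{c^2+d^2},
\]
and the substitution $u=x\psi/\sqrt{x+\sigma}$ (which preserves positive zeros) puts this into Liouville normal form $u''+I(x)u=0$ for a completely explicit $I$. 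In all three parts of the theorem we have $\sigma>0$.

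For part (i) it suffices to show $I(x)<1$ on $(0,\infty)$. Clearing the common denominator $4x^2(x+\sigma)^2$, the numerator of $I(x)-1$ is a polynomial in $x$ whose coefficients $1-4\beta$, $4\sigma(1-\beta-a(a+1))$ and $-4a(a+1)\sigma^2$ are each nonpositive (and at least one strictly negative) when $a\geq\frac{1}{2}$ and $cd>0$: indeed $\beta\geq a^2\geq\frac{1}{4}$ handles the first, $1-\beta-a(a+1)\leq 1-2a^2-a\leq 0$ handles the second, and the third is manifest. Sturm's comparison with the auxiliary solution $v(x)=\sin(x-j_k)$ of $v''+v=0$ then forces $j_{k+1}\geq j_k+\pi$, with strict inequality from $I<1$ near $j_k$.

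For parts (ii) and (iii) one performs a further change of independent variable designed so that the target inequality becomes exactly a zero-spacing statement in the new variable. For (ii), set $t=x+\sigma\ln x$, so $dt/dx=1+\sigma/x$; then $w(t)=\sqrt{x}\,\psi(x)$ satisfies $w_{tt}+R(x)w=0$ for an explicit $R$, and the claim reduces to $t(j_{k+1})-t(j_k)>\pi$, which by Sturm reduces to verifying $R(x)<1$ on $(0,\infty)$. The hypothesis $d^2\geq c^2$ is what makes this work: it forces $\beta\geq a^2+a+\frac{1}{2}$, so $1-4\beta\leq-(2a+1)^2\leq-1$, and combined with $a\geq 0$ and $\sigma>0$ all four coefficients of the resulting cubic numerator turn out to be strictly negative. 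For (iii) one uses instead the conjugate substitution $t=x-\sigma\ln(x+\sigma)$ (for which $dt/dx=x/(x+\sigma)$); the transformed equation $w_{tt}+R'(x)w=0$ now requires the reversed inequality $R'(x)>1$, which reduces to the positivity on $(0,\infty)$ of an explicit cubic $M(x)$, and then Sturm yields \eqref{zero_comparison_case_3}.

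The chief obstacle will be precisely this polynomial-positivity step in (iii). Under the hypotheses $a\in(-1,-\frac{1}{2})$, $cd<0$ and $d^2\geq c^2$ the constant, linear and cubic coefficients of $M(x)$ are all positive (using $a(a+1)<0$ together with the upper bound $\beta\leq a^2+a+\frac{1}{2}<\frac{1}{2}$), but its quadratic coefficient $4\sigma^2+4\sigma(1-\beta-a(a+1))-3$ need not be nonnegative for small $\sigma$. The cleanest way to finish is to bound the discriminant of $M'(x)=3Ax^2+2Bx+C$ and show $B^2<3AC$, which makes $M$ strictly increasing on $[0,\infty)$ and hence positive there; pinning this down uniformly in $c,d$ requires a careful bookkeeping of the three constraints $a\in(-1,-\frac{1}{2})$, $d^2\geq c^2$ and $cd<0$ together with the identity $\sigma=(2a+1)cd/(c^2+d^2)$, and is the most delicate portion of the argument.
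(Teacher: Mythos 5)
Your parts (i) and (ii) are correct and follow essentially the paper's route: your ODE for $\psi_{a,c,d}$ is exactly the paper's equation \eqref{eqCK} divided by $(c^2+d^2)x^2(x+\sigma)$, your normal-form potential $I$ coincides with the paper's $C$, and your substitution $t=x+\sigma\ln x$ is the paper's Liouville change of variable $z'(x)=u(x)x^{-3}$ up to the constant factor $c^2+d^2$. The only difference is stylistic: you verify $I<1$ (resp.\ $R<1$) by a direct sign check of the numerator, whereas the paper proves monotonicity of the transformed potential and compares it with its limit at infinity. Your coefficients in (i) are correct, and in (ii) the four coefficients of the cubic numerator of $R-1$, namely $-8\sigma$, $1-4\beta-20\sigma^2$, $-4\sigma(\beta+a(a+1))-16\sigma^3$ and $-\sigma^2\bigl((2a+1)^2+4\sigma^2\bigr)$, are indeed all negative under $a\ge0$, $cd>0$, $d^2\ge c^2$ (using $\beta\ge a^2+a+\tfrac12$), so both verifications go through.

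Part (iii), however, is not proved: you only outline a strategy (positivity of a cubic $M$ via a discriminant bound for $M'$), you explicitly defer what you call the most delicate portion, and the coefficient you report, $4\sigma^2+4\sigma(1-\beta-a(a+1))-3$, is not the quadratic coefficient of the relevant numerator. Carrying out the computation with $dt/dx=x/(x+\sigma)$ gives $M(x)=8\sigma x^3+(4\sigma^2+1-4\beta)x^2+4\sigma(2-\beta-a(a+1))x+\sigma^2\bigl(4-(2a+1)^2\bigr)$, and the problematic-looking coefficient $4\sigma^2+1-4\beta$ factors as $(2a+1)(1-2s)\bigl[2-(2a+1)(1-2s)\bigr]$ with $s=d^2/(c^2+d^2)$, which is nonnegative precisely under your hypotheses $-1<a<-\tfrac12$, $cd<0$, $d^2\ge c^2$; so no discriminant analysis is needed and the gap is fillable, but as written your argument for (iii) is incomplete and rests on an incorrect coefficient. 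The paper sidesteps this entirely: with the same change of variable it shows (proof of Theorem \ref{zero_convexity_theorem}(iv)) that the transformed potential $\Omega$ is strictly decreasing — its derivative is a combination of $x^{-2},\dots,x^{-5}$ whose coefficients all have one sign under the hypotheses — and tends to $(c^2+d^2)^2$ at infinity, hence exceeds that constant on $(0,\infty)$, after which Sturm comparison yields \eqref{zero_comparison_case_3}. So: (i) and (ii) stand, (iii) needs either the corrected coefficient computation above or the paper's monotonicity argument to be complete.
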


\begin{remark}
The estimates in Theorems \ref{zero_convexity_theorem} and \ref{zero_comparison_theorem} were obtained by finding an appropriate second order linear differential equation, then applying the Liouville transform of the form $ z^\prime(x) = x^{p} ((2a+1)cd+(c^2+d^2)x)^{q} $ for $ p, q \in \mathbb{Z} $. The estimates are a result of using the Sturm comparison and convexity theorems to this transformed equation for specific values of $ p $ and $ q $. For details on the Sturm comparison and convexity theorems, see \cite[Theorem 1]{Deano-Gil-Segura}. If $ c = 0 $ or $ d = 0 $, we obtain the results known for Bessel functions, see \cite[Theorem 21]{Deano-Gil-Segura}.
\end{remark}

Since $ j_k(a,c,d)  $ is between the $ k $-th zero of $ J_a(x) $ and $ J_{a+1}(x) $ (which is implied by the interlacing zero property of $ J_a(x) $ and $ J_{a+1}(x) $, see \cite[15.22]{Watson}) and the $ k $-th zero of $ J_\nu(x) $ is $ \big(k + \frac{2\nu - 1}{4}\big) \pi + o(1) $, see \cite[15.53]{Watson}, we have $ \frac{j_{k+1}(a,c,d)}{j_k(a,c,d)} = 1 + o(1) $, hence \eqref{zero_comparison_case_2} can be written as
\[
	j_{k+1}(a,c,d) - j_k(a,c,d) > \pi + o(1)
\]
and \eqref{zero_comparison_case_3} can be written as
\[
	j_{k+1}(a,c,d) - j_k(a,c,d) < \pi + o(1),
\]
which is asymptotically sharp. These kind of asymptotic inequalities are available for the full range of parameters. In this case, we can state the following theorem.

\begin{theorem}\label{Bessel_zero_asymptotic_theorem}
Let $ a > -1 $ and $ c,d \in \mathbb{R} $. Then
\begin{equation}\label{Bessel_zero_asymptotic_equation}
	\lim_{k \to \infty} \big(j_{k+1}(a,c,d) - j_k(a,c,d)\big) = \pi.
\end{equation}
\end{theorem}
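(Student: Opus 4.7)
The plan is to reduce the statement to the standard large-argument Hankel asymptotics of the Bessel functions. Assume $(c,d)\neq(0,0)$, otherwise the claim is vacuous. I would start from the classical expansions
\[
    J_a(x)=\sqrt{\tfrac{2}{\pi x}}\cos\!\bigl(x-\tfrac{(2a+1)\pi}{4}\bigr)+O(x^{-3/2}),\quad J_{a+1}(x)=\sqrt{\tfrac{2}{\pi x}}\sin\!\bigl(x-\tfrac{(2a+1)\pi}{4}\bigr)+O(x^{-3/2}),
\]
which, combined with the identity $c\cos\xi+d\sin\xi=A\cos(\xi-\phi)$, where $A=\sqrt{c^2+d^2}>0$ and $\cos\phi=c/A$, $\sin\phi=d/A$, give
\[
    \psi_{a,c,d}(x)=\sqrt{\tfrac{2}{\pi x}}\bigl[A\cos(x-\theta)+O(x^{-1})\bigr],\quad x\to\infty,
\]
with $\theta=\tfrac{(2a+1)\pi}{4}+\phi$. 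The analogous asymptotics for $J_a'$ and $J_{a+1}'$ produce the companion formula $\psi'_{a,c,d}(x)=-\sqrt{2/(\pi x)}\,A\sin(x-\theta)+O(x^{-3/2})$, reducing the problem to locating the zeros of a small perturbation of a pure cosine.

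Introduce the reference grid $\widetilde{\jmath}_k=\theta+\pi/2+k\pi$, i.e.\ the zeros of $\cos(x-\theta)$, and fix $\varepsilon\in(0,\pi/4)$. The central claim I would establish is that, for every sufficiently large $k$, $\psi_{a,c,d}$ has exactly one zero in the window $[\widetilde{\jmath}_k-\varepsilon,\widetilde{\jmath}_k+\varepsilon]$ and no zeros in the gap $[\widetilde{\jmath}_k+\varepsilon,\widetilde{\jmath}_{k+1}-\varepsilon]$. Existence inside each window follows from the intermediate value theorem, since the main term $A\cos(x-\theta)$ takes opposite signs at the endpoints with magnitude $A\sin\varepsilon$, which dominates the $O(1/x)$ error once $x$ is large. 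Uniqueness within a window follows from the derivative asymptotic: $|A\sin(x-\theta)|\geq A\cos\varepsilon$ there, so $\psi_{a,c,d}$ is strictly monotone on the window. Absence of zeros in a gap is immediate because $|\cos(x-\theta)|\geq\sin\varepsilon$ uniformly on the gap, again dominating the error.

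These three observations together show that the zeros of $\psi_{a,c,d}$ beyond some threshold are in one-to-one correspondence with the grid points $\widetilde{\jmath}_k$, and each zero lies within $\varepsilon$ of the corresponding grid point. The positive-index enumeration $j_k(a,c,d)$ of the zeros therefore differs from the natural enumeration of the matching grid points by a fixed integer shift, and for $k$ large enough we obtain $|j_{k+1}(a,c,d)-j_k(a,c,d)-\pi|\leq 2\varepsilon$. Letting $\varepsilon\downarrow 0$ yields \eqref{Bessel_zero_asymptotic_equation}. The only modest obstacle is the uniformity of the Hankel remainders on a fixed neighborhood of the positive real axis; this is classical, and since the remainder is of absolute order $x^{-3/2}$ while the leading term is of order $x^{-1/2}$, the constant $\varepsilon$ can indeed be chosen independently of $k$ for $k$ large. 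No Sturm-theoretic machinery from Theorems \ref{zero_convexity_theorem} or \ref{zero_comparison_theorem} is required for this purely asymptotic statement.
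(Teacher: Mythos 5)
Your argument is correct, but it follows a genuinely different route from the paper. You work directly from the large-argument (Hankel) asymptotics of $J_a$ and $J_{a+1}$, collapse $c\cos+d\sin$ into a single shifted cosine of amplitude $\sqrt{c^2+d^2}$, and then localize the zeros of $\psi_{a,c,d}$ by an intermediate-value/monotonicity argument in $\varepsilon$-windows around the zeros of that cosine; this needs only the uniformity of the $O(x^{-3/2})$ remainders on the positive axis, which is classical, and it actually yields more than the statement, namely that $j_k(a,c,d)$ lies within $\varepsilon$ of a fixed arithmetic grid $\theta+\pi/2+k\pi$, i.e.\ an asymptotic location of the zeros, not just their spacing (the only step worth spelling out a little more is why, beyond the threshold, consecutive zeros sit in consecutive windows — which follows from your ``no zeros in the gaps'' claim — and why only finitely many zeros lie below the threshold, which follows from the analyticity of $x^{-a}\psi_{a,c,d}$). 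The paper instead recycles the Sturm--Liouville normal form already built for Theorems \ref{zero_convexity_theorem} and \ref{zero_comparison_theorem}: the potential $C(x)$ of \eqref{C_definition} satisfies $C(x)=1+O(x^{-2})$, so for any $\varepsilon>0$ one has $(1+\varepsilon)^{-1}\leq C(x)\leq 1+\varepsilon$ beyond some $x_0$, and the Sturm comparison theorem of \cite{Deano-Gil-Segura} gives the two-sided bound $\pi(1+\varepsilon)^{-1/2}<j_{k+1}(a,c,d)-j_k(a,c,d)<\pi(1+\varepsilon)^{1/2}$ for all large $k$, whence the limit. The paper's proof is essentially two lines given the machinery already in place and stays within the Sturm framework used for the rest of Section \ref{Bessel_linear_combination_zeros}; yours is self-contained, avoids the differential equation entirely, and gives sharper information on where the zeros are, at the cost of the bookkeeping with windows, gaps, and enumeration.
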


Theorem \ref{Bessel_zero_asymptotic_theorem} says that although the distance of neighbouring zeros may oscillate, these oscillations are smoothed out at infinity. \\

The outline of the paper is the following. In Section \ref{section_asymptotics} we employ the Riemann-Hilbert analysis and steepest descent method of Deift and Zhou, see for example \cite{Deift}, to obtain strong asymptotic formulas for the monic orthogonal polynomials $ \pi_n(x) $. Then, in Section \ref{section_zeros}, we extract the information about the zeros relative to the singularity after scaling the asymptotic formulas we obtained, which will prove Theorem \ref{main_theorem_1}. Finally in Section \ref{Bessel_linear_combination_zeros} we study the positive zeros $ cJ_a(x) + dJ_{a+1}(x) $, where we prove Theorems \ref{zero_convexity_theorem}, \ref{zero_comparison_theorem}, \ref{Bessel_zero_asymptotic_theorem}.

\section{Asymptotics for orthogonal polynomials}\label{section_asymptotics}

\subsection{Riemann-Hilbert problem for the orthogonal polynomials}

Let $ -1 = x_{0} < x_1 < \dots < x_{n_0} < x_{n_0 + 1} = 1 $ be given points in the interval $ [-1,1] $ and define the measure $ \mu $ by
\[
	d\mu(x) = (1-x)^{\alpha}(1 + x)^{\beta} h(x) \prod_{\nu = 1}^{n_0} |x - x_\nu|^{\lambda_\nu} dx, \quad x \in [-1,1],
\]
where $ h $ is a positive analytic function and $ \alpha, \beta, \lambda_1, \dots, \lambda_{n_0} > -1 $, $ \lambda_1, \dots, \lambda_{n_0} \neq 0 $. In this section our goal is to obtain asymptotic formulas for orthogonal polynomials with respect to $ \mu $. To achieve this, we shall closely follow the Riemann-Hilbert analysis of M. Vanlessen \cite{Vanlessen} and A. B. J. Kuijlaars, M. Vanlessen \cite{Kuijlaars-Vanlessen}. First we define the Riemann-Hilbert problem for the $ 2 \times 2 $ matrix valued function $ Y(z) =  (Y_{ij}(z))_{i,j=1}^{2} $ as in \cite{Vanlessen}. Suppose that \\
(a) $ Y(z) $ is analytic for $ z \in \mathbb{C} \setminus [-1,1] $. \\
(b) For all $ x \in (-1,1)\setminus \{ x_1, \dots, x_{n_0} \} $ the limits
\[
	Y_+(x) = \lim_{\substack{z \to x \\ \operatorname{Im}(z) > 0}} Y(z), \quad Y_-(x) = \lim_{\substack{z \to x \\ \operatorname{Im}(z) < 0}} Y(z)
\]
exists and the jump condition
\[
	Y_{+}(x) = Y_{-}(x) \begin{pmatrix} 1 & w(x) \\ 0 & 1 \end{pmatrix}, x \in (-1,1) \setminus \{ x_1, \dots, x_{n_0} \}
\]
holds. \\
(c) For the behavior of $ Y(z) $ near infinity we have
\[
	Y(z) = (I + O(z^{-1})) \begin{pmatrix} z^n & 0 \\ 0 & z^{-n} \end{pmatrix}, \quad z \to \infty.
\]
(d) For the behavior of $ Y(z) $ near $ z = 1 $ we have
\[
	Y(z) =
	\begin{cases}
		O \begin{pmatrix} 1 & |z-1|^\alpha \\ 1 & |z-1|^\alpha \end{pmatrix}, & \text{if } \alpha < 0, \\ 
		O \begin{pmatrix} 1 & \log|z-1| \\ 1 & \log|z-1| \end{pmatrix}, & \text{if } \alpha = 0, \\ 
		O \begin{pmatrix} 1 & 1 \\ 1 & 1 \end{pmatrix}, & \text{if } \alpha > 0, \\ 
	\end{cases}
\]
as $ z \to 1 $, $ z \in \mathbb{C} \setminus [-1,1] $. \\
(e) For the behavior of $ Y(z) $ near $ z = -1 $ we have
\[
	Y(z) =
	\begin{cases}
		O \begin{pmatrix} 1 & |z+1|^\beta \\ 1 & |z+1|^\beta \end{pmatrix}, & \text{if } \beta < 0, \\ 
		O \begin{pmatrix} 1 & \log|z+1| \\ 1 & \log|z+1| \end{pmatrix}, & \text{if } \beta = 0, \\ 
		O \begin{pmatrix} 1 & 1 \\ 1 & 1 \end{pmatrix}, & \text{if } \beta > 0, \\ 
	\end{cases}
\]
as $ z \to -1 $, $ z \in \mathbb{C} \setminus [-1,1] $. \\
(f) For the behavior of $ Y(z) $ near the singularity $ x_\nu $, $ \nu \in \{ 1, \dots, n_0 \} $ we have
\[
	Y(z) =
	\begin{cases}
		O \begin{pmatrix} 1 & |z-x_\nu|^{\lambda_\nu} \\ 1 & |z-x_\nu|^{\lambda_\nu} \end{pmatrix}, & \text{if } \lambda_\nu < 0, \\ 
		O \begin{pmatrix} 1 & 1 \\ 1 & 1 \end{pmatrix}, & \text{if } \lambda_\nu > 0, \\ 
	\end{cases}
\]
as $ z \to x_\nu$, $ z \in \mathbb{C} \setminus [-1,1] $. The unique solution for this Riemann-Hilbert problem can be expressed in terms of orthogonal polynomials. If $ \pi_n(z) $ denotes the monic orthogonal polynomial of degree $ n $ with respect to the measure $ \mu $ and  $ \gamma_n(\mu) = \gamma_n $ denotes the leading coefficient of the orthonormal polynomial $ p_n(z) $, then, see \cite[Theorem 2.2]{Vanlessen}, $ Y(z) $ takes the form
\begin{equation}\label{model_case_Y_OP}
	Y(z) =
	\begin{pmatrix}
	\pi_n(z) & \frac{1}{2\pi i} \int_{-1}^{1} \frac{\pi_n(x)}{x - z} d\mu(x) \\
	-2\pi i \gamma_{n-1}^{2} \pi_{n - 1}(z) & - \gamma_{n-1}^{2} \int_{-1}^{1} \frac{\pi_{n - 1}(x)}{x - z} d\mu(x)
	\end{pmatrix}.
\end{equation}

To give an asymptotic formula for $ Y(z) $, we have to use a series of transformations $ Y \mapsto T \mapsto S \mapsto R $, thus obtaining a $ 2 \times 2 $ matrix valued function $ R(z) $ which is close to the identity matrix $ I $. These transformations are rather complicated and to carry out these, we shall need a few special functions. First define
\begin{equation}\label{phi_def}
	\varphi(z) = z + \sqrt{z^2 - 1},
\end{equation}
which is the conformal mapping from the exterior of the interval $ [-1,1] $ to the exterior of the unit disk. Next, we have to extend the weight function $ w(x) $ to the complex plane. This is done as in \cite[Section 3.2]{Vanlessen}. The factor $ h $ is analytic, therefore there is a neighborhood $ U $ of $ [-1,1] $ such that $ h $ is analytic in $ U $. For the analytic continuation of the factors $ |x-x_\nu|^{\lambda\nu} $, we have to proceed differently. For each singularity $ x_\nu, \nu = 1, \dots, n_0 $ there is a hyperbola $ \Gamma_{x_\nu} $ going through $ x_\nu $ such that the images of $ \Gamma_{x_\nu} \cap \mathbb{C}_+ $ and $ \Gamma_{x_\nu} \cap \mathbb{C}_- $ under the mapping $ \varphi $ are straight rays. The contour $ \Gamma_{x_\nu} $ divides $ \mathbb{C} $ into a left and right part denoted with $ K_{x_\nu}^{l} $ and $ K_{x_\nu}^{r} $. For convenience, we introduce the notation $ K_{x_0}^{r} = K_{x_{n_0 + 1}}^{l} = \mathbb{C} $. With these, we define $ w(z) $ in $ U \setminus ((-\infty, -1] \cup [1, \infty) \cup [\cup_{\nu=1}^{n_0} \Gamma_{x_\nu}]) $ as
\[
	w(z) = (1-z)^{\alpha}(1+z)^{\beta} h(z) \prod_{k=1}^{\nu} (z-x_k)^{\lambda_k} \prod_{k=\nu + 1}^{n_0} (x_k - z)^{\lambda_k}, \quad z \in K_{\nu}^{r} \cap K_{\nu+1}^{l}.
\]
For $ \nu = 1, \dots, n_0 $ we define the auxiliary function $ W_{x_\nu}(z) $ as
\begin{align*}
	W_{x_\nu}(z) =  & (1-z)^{\frac{\alpha}{2}}(z+1)^{\frac{\beta}{2}} h(z)^{\frac{1}{2}} \prod_{k=1}^{\nu-1} (z-x_\nu)^{\frac{\lambda_k}{2}} \prod_{k=\nu + 1}^{n_0} (x_k - z)^{\frac{\lambda_k}{2}} \\
	& \times \begin{cases} (z-x_\nu)^{\frac{\lambda_\nu}{2}} & \text{if } z \in (K_{x_\nu}^{l} \cap U) \setminus \mathbb{R}, \\ (x_\nu-z)^{\frac{\lambda_\nu}{2}} & \text{if } z \in (K_{x_\nu}^{r} \cap U) \setminus \mathbb{R}. \end{cases}
\end{align*} 
Note that we have
\[
	W_{x_\nu, +}(x) W_{x_\nu, -}(x) = w(x), \quad x \in (x_{\nu-1}, x_{\nu + 1}) \setminus \{ x_\nu \}
\]
and
\begin{equation}\label{W_x_nu_jumps}
	W_{x_\nu}^{2}(z) =
	\begin{cases}
		w(z) e^{-\pi i \lambda_\nu} & \text{if } z \in (K_{x_\nu}^{r} \cap \mathbb{C}_+) \cup (K_{x_\nu}^{l} \cap \mathbb{C}_-) \\
		w(z) e^{\pi i \lambda_\nu} & \text{if } z \in (K_{x_\nu}^{l} \cap \mathbb{C}_+) \cup (K_{x_\nu}^{r} \cap \mathbb{C}_-).
	\end{cases}
\end{equation}
The auxiliary functions $ f_{x_\nu}(z) $ for $ \nu = 1, \dots, n_0 $ are defined as
\begin{equation}\label{f_x_nu_definition}
	f_{x_\nu}(z) =
	\begin{cases}
		i \log \varphi(z) - i \log \varphi_+(x_\nu) & \textnormal{if } \operatorname{Im}(z) > 0, \\
		- i \log \varphi(z) - i \log \varphi_+(x_\nu) & \textnormal{if } \operatorname{Im}(z) < 0,
	\end{cases}
\end{equation}
where $ \varphi_+(x) = \lim_{z \to x, \operatorname{Im}(z) > 0} \varphi(z) = \exp(i \arccos x) $. \\

Because of the singularities at $ -1 = x_0 < x_1 < \cdots < x_{n_0} < x_{n_0 + 1} = 1 $, the strong asymptotics are different in their proximity than away from them. Suppose that $ U_{\delta, x_0}, U_{\delta, x_1}, \dots, U_{\delta,x_{n_0 + 1}} $ is a disjoint collection of open disks with radius $ \delta $ around the $ x_\nu $-s and let $ \Sigma $ be a lens-shaped contour around $ [-1,1] $ closing at the singularities defined such that the preimage of the contour parts in $ U_{\delta, x_\nu} $ under the mapping $ f_{x_\nu}(z) $ are the rays $ \{ z: \arg(z-x_\nu)= \frac{\pi}{4} + k\frac{\pi}{2} \} $ for $ k = 0, 1, 2, 3 $. For more on the choice of the contour, see \cite[Section 4.2]{Vanlessen}. We shall give asymptotic formulas inside the reduced contour $ \Sigma_R = [\cup_{\nu=0}^{n_0+1} \partial {U_{\delta, x_\nu}}] \cup [\Sigma \setminus \cup_{\nu=0}^{n_0 + 1} U_{\delta, x_\nu}] $.

\subsection{Parametrix for the outside region}

To construct the asymptotic formula for $ Y(z) $, we need the solution $ N(z) $ of a model Riemann-Hilbert problem. $ N(z) $ is defined as
\begin{equation}
	N(z) = D_{\infty}^{\sigma_3} \begin{pmatrix} \frac{a(z) + a(z)^{-1}}{2} & \frac{a(z) - a(z)^{-1}}{2i} \\ \frac{a(z) - a(z)^{-1}}{-2i} & \frac{a(z) + a(z)^{-1}}{2} \end{pmatrix} D(z)^{-\sigma_3},
\end{equation}
where $ a(z) = \frac{(z-1)^{\frac{1}{4}}}{(z + 1)^{\frac{1}{4}}} $ and $ D(z) $ is the Szeg\H{o} function (associated to the measure $ \mu $) which is defined in $ \mathbb{C} \setminus [-1,1] $ by
\[
	D(z) = \frac{(z-1)^{\frac{\alpha}{2}} (z+1)^{\frac{\beta}{2}} \prod_{\nu=1}^{n_0} (z-x_\nu)^{\frac{\lambda}{2}}}{\varphi(z)^{\frac{1}{2}(\alpha + \beta + \sum_{\nu=1}^{n_0} \lambda_\nu )}} \exp\bigg( \frac{\sqrt{z^2-1}}{2\pi} \int_{-1}^{1} \frac{\log h(x)}{\sqrt{1-x^2}} \frac{dx}{z-x} \bigg),
\]
moreover $ \sigma_3 $ denotes the Pauli matrix
\[
	\sigma_3 = \begin{pmatrix} 1 & 0 \\ 0 & -1 \end{pmatrix}
\]
and the symbol $ h(z)^{\sigma_3} $ denotes the $ 2 \times 2 $ matrix
\[
	h(z)^{\sigma_3} = \begin{pmatrix} h(z) & 0 \\ 0 & h(z)^{-1} \end{pmatrix}.
\]
For a complete discussion about $ N(z) $ and its role in the solution, see \cite[Section 3.2]{Vanlessen}. The boundary values of the Szeg\H{o} function $ D(z) $ along $ \mathbb{R} $ can be expressed as
\begin{equation}\label{D_boundary_value}
	D_+(x) = \sqrt{w(x)} e^{-i\psi_\nu(x)}, \quad x \in (x_\nu, x_{\nu + 1}),
\end{equation}
where
\begin{equation}\label{psi_def}
\begin{aligned}
	\psi_\nu(x) = \frac{1}{2} \bigg[ \bigg( \alpha + \beta + \sum_{k=1}^{n_0} \lambda_k \bigg) \arccos x & - \bigg( \alpha + \sum_{k=\nu +1}^{n_0} \lambda_k \bigg) \pi \\
	& + \frac{\sqrt{1-x^2}}{\pi} \int_{-1}^{1} \frac{\log h(t)}{\sqrt{1-t^2}} \frac{dt}{t-x} \bigg].
\end{aligned}
\end{equation}
Note that $ \psi_\nu(x) + \frac{\pi}{2} \lambda_\nu = \psi_{\nu - 1}(x) $. Regarding the boundary values of the functions $ \frac{a(z) + a(z)^{-1}}{2} $ and $ \frac{a(z) - a(z)^{-1}}{2i} $, we have
\begin{equation}\label{m_boundary_value}
\begin{aligned}
	\frac{a_+(x) + a_{+}^{-1}(x)}{2} & = \frac{e^{-i\frac{\pi}{4}}}{\sqrt{2}(1-x^2)^{\frac{1}{4}}} \varphi_+(x)^{\frac{1}{2}}, \\
	\frac{a_+(x) - a_{+}^{-1}(x)}{2i} & = \frac{e^{i\frac{\pi}{4}}}{\sqrt{2}(1-x^2)^{\frac{1}{4}}} \varphi_+(x)^{-\frac{1}{2}},
\end{aligned}
\end{equation}
where $ \varphi(z) $ is defined by \eqref{phi_def} and its boundary value is $ \varphi_+(x) = \exp(i \arccos x) $.

\subsection{Parametrix for the singularities}

To give an asymptotic formula for $ Y(z) $ around the singularities $	-1 = x_0 < x_1 < \dots < x_{n_0} < x_{n_0 + 1} = 1 $, we need some parametrix functions $ P_{x_\nu}(z) $. These are constructed from Bessel and modified Bessel functions. The constructon depends on whether $ x_\nu $ is an endpoint of $ [-1, 1] $ or it is in its interior. First we give the parametrix for the endpoints. These were constructed in \cite{Kuijlaars-McLaughlin-VanAssche-Vanlessen}. Define the function $ \Psi_{\alpha}^{e}(z) $ as
\begin{equation}
\Psi_{\alpha}^{e}(z) =
		\begin{pmatrix}
		\frac{1}{2} H_{\alpha}^{(1)}(2\sqrt{-z}) & \frac{1}{2} H_{\alpha}^{(2)}(2\sqrt{-z}) \\
		\pi \sqrt{z} (H_{\alpha}^{(1)})^{\prime}(2\sqrt{-z}) & \pi \sqrt{z} (H_{\alpha}^{(2)})^{\prime}(2\sqrt{-z}) 
		\end{pmatrix} e^{\frac{1}{2}\alpha \pi i \sigma_3},
		\quad\frac{2\pi}{3} < \arg z < \pi,
\end{equation}
where $ H_{\alpha}^{(1)} $ and $ H_{\alpha}^{(2)} $ denotes the Hankel functions of the first and second kind. Although $ \Psi_{\alpha}^{e}(z) $ can be defined in the whole complex plane, this shall be enough for our purposes. For more details, see \cite{Kuijlaars-McLaughlin-VanAssche-Vanlessen} and \cite{Kuijlaars-Vanlessen}. This way, the parametrix $ P_1(z) $ can be defined as
\[
	P_1(z) = N(z) W_1(z)^{\sigma_3} \frac{1}{\sqrt{2}} \begin{pmatrix} 1 & -i \\ -i & 1 \end{pmatrix} f_1(z)^{\frac{\sigma_3}{4}} (2\pi n)^{\frac{\sigma_3}{2}} \Psi_{\alpha}^{e}(n^2 f_1(z)) W_1(z)^{-\sigma_3} \varphi(z)^{-n\sigma_3}.
\]

Now let $ \nu \in \{ 1, \dots, n_0 \} $ be fixed. We divide the complex plane around $ x_\nu $ into eight congruent octants defined as
\[
	O_i = \bigg\{ z:  \frac{(k-1)\pi}{4} \leq \arg(z-x_\nu) \leq \frac{k \pi}{4} \bigg\}, \quad i = 1, 2, \dots, 8.
\]
Define a $ 2 \times 2 $ matrix valued function $ \Psi_{\alpha}^{s}(z) $ in the first and the fourth octant $ O_1 $ and $ O_4 $ as
\begin{equation}
	\Psi_{\alpha}^{s}(z) =
	\begin{cases}
		\frac{1}{2} \sqrt{\pi z} \begin{pmatrix} e^{-i\frac{2\alpha + 1}{4}\pi} H_{\frac{\alpha + 1}{2}}^{(2)}(z) & -i e^{i\frac{2\alpha + 1}{4}\pi} H_{\frac{\alpha + 1}{2}}^{(1)}(z) \\ e^{-i\frac{2\alpha + 1}{4}\pi} H_{\frac{\alpha - 1}{2}}^{(2)}(z) & -i e^{i\frac{2\alpha + 1}{4}\pi} H_{\frac{\alpha - 1}{2}}^{(1)}(z) \end{pmatrix} & \text{if } z \in O_1 \\
		\frac{1}{2} \sqrt{\pi(-z)} \begin{pmatrix} ie^{i\frac{2\alpha + 1}{4}\pi }H_{\frac{\alpha + 1}{2}}^{(1)}(-z) & -e^{-i\frac{2\alpha + 1}{4}\pi }H_{\frac{\alpha + 1}{2}}^{(2)}(-z) \\ -ie^{i\frac{2\alpha + 1}{4}\pi}H_{\frac{\alpha - 1}{2}}^{(1)}(-z) & e^{-i\frac{2\alpha + 1}{4}\pi}H_{\frac{\alpha - 1}{2}}^{(2)}(-z)  \end{pmatrix} & \text{if } z \in O_4.
	\end{cases}
\end{equation}
The definition of $ \Psi_{x_\nu}(z) $ can be extended to the whole complex plane, but for our purposes, $ O_1 $ and $ O_4 $ are enough. For the complete definition, see \cite[Section 4.2]{Vanlessen}. Using these, the parametrix, as originally given in \cite[Section 4.3]{Vanlessen}, is defined as
\begin{equation}\label{parametrix_at_singularity_right}
\begin{aligned}
	P_{x_\nu}(z) = & N(z) W_{x_\nu}(z)^{\sigma_3} e^{\frac{1}{4} \lambda_\nu \pi i \sigma_3} \varphi_+(x_\nu)^{n\sigma_3} e^{-i\frac{\pi}{4}\sigma_3} \\
	& \times \frac{1}{\sqrt{2}} \begin{pmatrix} 1 & i \\ i & 1 \end{pmatrix}  \Psi_{\lambda_\nu}^{s}(n f_{x_\nu}(z)) W_{x_\nu}(z)^{-\sigma_3} \varphi(z)^{-n\sigma_3} 
\end{aligned}
\end{equation}
if $ z \in O_1 $ and
\begin{equation}\label{parametrix_at_singularity_left}
\begin{aligned}
	P_{x_\nu}(z) = & N(z) W_{x_\nu}(z)^{\sigma_3} e^{-\frac{1}{4} \lambda_\nu \pi i \sigma_3} \varphi_+(x_\nu)^{n\sigma_3} e^{-i\frac{\pi}{4}\sigma_3} \\
	& \times \frac{1}{\sqrt{2}} \begin{pmatrix} 1 & i \\ i & 1 \end{pmatrix}  \Psi_{\lambda_\nu}^{s}(n f_{x_\nu}(z)) W_{x_\nu}(z)^{-\sigma_3} \varphi(z)^{-n\sigma_3} 
\end{aligned}
\end{equation}
if $ z \in O_4 $, where the only difference is the sign of the exponent of the third term.

\subsection{Asymptotic formulas}

By applying the transformations $ Y \mapsto T \mapsto S \mapsto R $ detailed in \cite{Vanlessen}, we obtain a $ 2 \times 2 $ matrix valued function $ R(z) $ which is uniformly close to the identity matrix, i.e. $ R(z) = I + O(n^{-1}) $ for all $ z \in \mathbb{C} \setminus \Sigma_R $. This can be expressed as
\begin{equation}\label{R_product_form}
\begin{aligned}
	R(z) = 2^{n \sigma_3} Y(z) & \varphi(z)^{-n\sigma_3} \begin{pmatrix} 1 & 0 \\ -w(z)^{-1} \varphi(z)^{-2n} & 1 \end{pmatrix} \\
	& \times \begin{cases}
	N(z)^{-1} & \text{if } z \in \mathbb{C}_+ \setminus (\Sigma \cup [\cup_{\nu=0}^{n_0 + 1} U_{\delta, x_\nu}]) \\
	P_{x_\nu}(z)^{-1} & \text{if } z \in \mathbb{C}_+ \cap U_{\delta, x_\nu}.
	\end{cases}
\end{aligned}
\end{equation}
From this, by unraveling the transformations, we can obtain formulas for $ \pi_n(x) $. In order to establish asymptotic formulas for $ \pi_n(x) $, $ x \in [-1,1] $, we have to distinguish three cases. \\
(i) $ x $ is far away from the singularities and the endpoints, that is $ x \in (x_\nu + \delta, x_{\nu+1} - \delta) $, $ \nu = 1, \dots, n_0 $. \\
(ii) $ x $ is near some endpoint, that is $ x \in (-1, -1 + \delta) \cup (1 - \delta, 1) $. \\
(iii) $ x $ is near some singularity, that is $ x \in (x_\nu - \delta, x_\nu + \delta) $, $ \nu = 1, \dots, n_0 $. \\

In the first and the second cases (although in a slightly different setting), Kuijlaars, McLaughlin, Van Assche and Vanlessen \cite{Kuijlaars-McLaughlin-VanAssche-Vanlessen} obtained the formula
\begin{equation}\label{asymptotics_away_from_singularities}
\begin{aligned}
	\pi_n(x) & = \frac{D_\infty}{2^n (1-x^2)^{\frac{1}{4}}} \sqrt{\frac{2}{w(x)}} \\
	& \times \Big[ \Big( 1 + O(n^{-1}) \Big) \cos \Big( \Big(n + \frac{1}{2}\Big) \arccos x + \psi_{\nu}(x) - \frac{\pi}{4} \Big) \\
	& \phantom{aaaaaa.}+ O(n^{-1}) \cos \Big( \Big(n - \frac{1}{2}\Big) \arccos x + \psi_{\nu}(x) - \frac{\pi}{4} \Big) \Big]
\end{aligned}
\end{equation}
for $ x \in (x_\nu + \delta, x_{\nu + 1} - \delta) $, $ \nu = 1, \dots, n_0 $ and
\begin{equation}\label{asymptotics_at_endpoint}
\begin{aligned}
	\pi_n(x) = & \frac{D_\infty}{2^n (1-x^2)^{\frac{1}{4}}} \sqrt{\frac{\pi (n \arccos x)}{w(x)}} \\
	& \times \Big[ \Big( 1 + O(n^{-1}) \Big) \Big( \cos \Big( \frac{1}{2} \arccos x + \psi_{n_0}(x) + \frac{\alpha \pi}{2} \Big) J_{\alpha}(n \arccos x) \\
	& \phantom{aaaaaaaaaaaaaa} + \sin \Big( \frac{1}{2} \arccos x + \psi_{n_0}(x) + \frac{\alpha \pi}{2} \Big) J_{\alpha}^{\prime}(n \arccos x) \Big) \\
	& \phantom{aa} + O(n^{-1}) \Big( \cos \Big( -\frac{1}{2} \arccos x + \psi_{n_0}(x) + \frac{\alpha \pi}{2} \Big) J_{\alpha}(n \arccos x) \\
	& \phantom{aaaaaaaa.a} + \sin \Big( - \frac{1}{2} \arccos x + \psi_{n_0}(x) + \frac{\alpha \pi}{2} \Big) J_{\alpha}^{\prime}(n \arccos x) \Big) \Big]
\end{aligned}
\end{equation}
for $ x \in (1-\delta, 1) $, where the $ O(n^{-1}) $ terms are uniform in $ x $. (Note that the weight $ w(x) $ and the phases $ \psi_\nu $ in (\ref{asymptotics_at_endpoint}) are not the same as in \cite{Kuijlaars-McLaughlin-VanAssche-Vanlessen}, but everything in their calculations carries through verbatim to our case.) \\

The third case is partially dealt with in \cite{Vanlessen}. Although the Riemann-Hilbert analysis was carried out near the singularites, an analogous asymptotic formula was not stated. The rest of this section is devoted to find this. We closely follow the lines of \cite{Kuijlaars-Vanlessen}.

\begin{lemma}
Around the singularity $ x_\nu $, $ \nu = 1, \dots, n_0 $ the first column of $ Y(x) $ has the form
\begin{equation}\label{Y_first_column_left_to_singularity}
\begin{aligned}
	\begin{pmatrix} Y_{11}(x) \\ Y_{12}(x) \end{pmatrix} = & \sqrt{\frac{\pi n(\arccos x - \arccos x_\nu)}{2 w(x)}} 2^{-n \sigma_3} M_+^l(x) \\ & \times \begin{pmatrix} e^{i \frac{3\pi}{4}} J_{\frac{\lambda_\nu + 1}{2}}(n (\arccos x - \arccos x_\nu)) \\ e^{-i \frac{\pi}{4}} J_{\frac{\lambda_\nu - 1}{2}}(n (\arccos x - \arccos x_\nu)) \end{pmatrix}
\end{aligned}
\end{equation}
for $ x \in (x_\nu - \delta, x_\nu) $ and
\begin{equation}\label{Y_first_column_right_to_singularity}
\begin{aligned}
	\begin{pmatrix} Y_{11}(x) \\ Y_{12}(x) \end{pmatrix} = &  \sqrt{\frac{\pi n(\arccos x - \arccos x_\nu)}{2 w(x)}} 2^{-n \sigma_3} M_+^r(x)\\ & \times \begin{pmatrix} e^{-i\frac{\pi}{4}} J_{\frac{\lambda_\nu + 1}{2}}(n (\arccos x_\nu - \arccos x)) \\ e^{-i\frac{\pi}{4}} J_{\frac{\lambda_\nu - 1}{2}}(n (\arccos x_\nu - \arccos x)) \end{pmatrix}
\end{aligned}
\end{equation}
for $ x \in (x_\nu, x_\nu + \delta) $, where $ M(z) $ is defined by
\[
	M^l(z) = R(z) N(z) W_{x_\nu }(z)^{\sigma_3} e^{-i \frac{\lambda_\nu}{4} \pi \sigma_3} \varphi_+(x_\nu)^{n \sigma_3}  e^{-i\frac{\pi}{4}\sigma_3} \begin{pmatrix} 1 & i \\ i & 1 \end{pmatrix}.
\]
and
\[
	M^r(z) = R(z) N(z) W_{x_\nu }(z)^{\sigma_3} e^{i \frac{\lambda_\nu}{4} \pi \sigma_3} \varphi_+(x_\nu)^{n \sigma_3}  e^{-i\frac{\pi}{4}\sigma_3} \begin{pmatrix} 1 & i \\ i & 1 \end{pmatrix}.
\]
\end{lemma}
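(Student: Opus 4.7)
The plan is to invert the transformation chain encoded in (\ref{R_product_form}), substitute the explicit parametrix $ P_{x_\nu} $ from (\ref{parametrix_at_singularity_right}) (when $ x>x_\nu $, approached through the octant $ O_1 $) or (\ref{parametrix_at_singularity_left}) (when $ x<x_\nu $, approached through $ O_4 $), take boundary values from the upper half plane, and then convert the Hankel functions appearing inside $ \Psi_{\lambda_\nu}^{s} $ to Bessel functions of the first kind via the identity $ H_a^{(1)}+H_a^{(2)}=2J_a $. This mirrors the strategy used for the endpoint case in \cite{Kuijlaars-Vanlessen}.

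Concretely, solving (\ref{R_product_form}) for $ Y(z) $ and reading off the first column inside $ U_{\delta,x_\nu}\cap\mathbb{C}_+ $ yields
\[
\begin{pmatrix}Y_{11}(z)\\ Y_{12}(z)\end{pmatrix} = 2^{-n\sigma_3}\,R(z)\,P_{x_\nu}(z)\begin{pmatrix}\varphi(z)^{n}\\ w(z)^{-1}\varphi(z)^{-n}\end{pmatrix}.
\]
Substituting the parametrix, the trailing block $ W_{x_\nu}(z)^{-\sigma_3}\varphi(z)^{-n\sigma_3} $ applied to this column simplifies to $ (W_{x_\nu}^{-1},\,W_{x_\nu}w^{-1})^{T} $. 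Passing to boundary values from above and using the jump relation (\ref{W_x_nu_jumps}), this further reduces to $ w(x)^{-1/2}(e^{\pm i\pi\lambda_\nu/2},\,e^{\mp i\pi\lambda_\nu/2})^{T} $, with the sign determined by which of $ O_1 $ or $ O_4 $ contains the approach path.

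The central step is then the action of $ \Psi_{\lambda_\nu}^{s}(n\,f_{x_\nu,+}(x)) $ on this column. A direct calculation shows that the rotational phases $ e^{\pm i(2\lambda_\nu+1)\pi/4} $ sitting in the entries of $ \Psi_{\lambda_\nu}^{s} $ combine with the $ e^{\mp i\pi\lambda_\nu/2} $ factors from $ W_{x_\nu} $ in such a way that the two Hankel functions $ H^{(1)} $ and $ H^{(2)} $ of a given order are rescaled by a common phase and then summed into $ 2J $. Using $ \varphi_+(x)=e^{i\arccos x} $ in (\ref{f_x_nu_definition}), one has $ f_{x_\nu,+}(x)=\arccos x_\nu-\arccos x $; the $ -z $ convention used for $ \Psi_{\lambda_\nu}^{s} $ on $ O_4 $ precisely produces the positive arguments $ n(\arccos x-\arccos x_\nu) $ on the left of $ x_\nu $ and $ n(\arccos x_\nu-\arccos x) $ on the right. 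The factor $ \tfrac{1}{\sqrt 2} $ appearing in (\ref{parametrix_at_singularity_right})--(\ref{parametrix_at_singularity_left}) combines with $ \sqrt{\pi\,|n f_{x_\nu,+}(x)|} $ coming from $ \Psi_{\lambda_\nu}^{s} $ to give the prefactor stated in the lemma.

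Finally, the remaining leading factors sitting to the left of $ \Psi_{\lambda_\nu}^{s} $ in (\ref{parametrix_at_singularity_right})--(\ref{parametrix_at_singularity_left}), prepended by $ R(z)N(z) $, assemble by construction into the matrices $ M^r $ (in the $ O_1 $ case) and $ M^l $ (in the $ O_4 $ case) of the statement, so collecting everything yields the identities (\ref{Y_first_column_left_to_singularity}) and (\ref{Y_first_column_right_to_singularity}). The main obstacle is purely technical bookkeeping: keeping track of all the phases $ \pm(2\lambda_\nu+1)\pi/4 $, $ \pm\pi\lambda_\nu/2 $, $ \pm\pi/4 $ and $ \pm\lambda_\nu\pi/4 $ as they flow through the matrix product, correctly distinguishing $ O_1 $ from $ O_4 $ on the two sides of $ x_\nu $, and in particular recovering the asymmetry that places $ e^{i3\pi/4} $ in the top Bessel entry when $ x<x_\nu $ but $ e^{-i\pi/4} $ when $ x>x_\nu $.
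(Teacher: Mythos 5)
Your proposal is correct and follows essentially the same route as the paper's own proof: invert \eqref{R_product_form} to write the first column of $Y$ as $2^{-n\sigma_3}R\,P_{x_\nu}(\varphi^{n},w^{-1}\varphi^{-n})^{T}$, collapse the trailing block to $w^{-1/2}(e^{\pm i\pi\lambda_\nu/2},e^{\mp i\pi\lambda_\nu/2})^{T}$ via \eqref{W_x_nu_jumps}, use $H^{(1)}_\gamma+H^{(2)}_\gamma=2J_\gamma$ together with the boundary value $(f_{x_\nu})_+(x)=\arccos x_\nu-\arccos x$, and absorb the remaining left factors into $M^{l}$, $M^{r}$. The intermediate identities you assert (the common phases $e^{i3\pi/4}$, $e^{-i\pi/4}$ and the octant bookkeeping) check out against the paper's computation, so the sketch is sound.
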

\begin{proof}
First we suppose that $ z \in U_{\delta, x_\nu} \cap K_{x_\nu}^{l} $, that is, we are in the left side of the singularity. According to (\ref{parametrix_at_singularity_left}) and (\ref{R_product_form}), $ Y(z) $ can be written as
\begin{align*}
	Y(z) = & 2^{-n\sigma_3} R(z) N(z) W_{x_\nu}(z)^{\sigma_3} e^{-\frac{\lambda_\nu}{4}\pi i \sigma_3} \varphi_+(x_\nu)^{n \sigma_3}  e^{-i\frac{\pi}{4}\sigma_3} \frac{1}{\sqrt{2}} \begin{pmatrix} 1 & i \\ i & 1 \end{pmatrix} \Psi_{\lambda_\nu}^{s}(n f_{x_\nu}(z)) \\
	& \times W_{x_\nu}(z)^{-\sigma_3} \varphi(z)^{-n \sigma_3} \begin{pmatrix} 1 & 0 \\ w(z)^{-1} \varphi(z)^{-2n} & 1 \end{pmatrix} \varphi(z)^{n \sigma_3}.
\end{align*}
Since $ W_{x_\nu}(z) = e^{i\frac{\pi}{2}\lambda_\nu} w(z)^{\frac{1}{2}} $ if $ z \in K_{x_\nu}^{l} \cap U_{\delta, x_\nu} $, the last terms simplify to
\[
	W_{x_\nu}(z)^{-\sigma_3} \varphi(z)^{-n \sigma_3} \begin{pmatrix} 1 & 0 \\ w(z)^{-1} \varphi(z)^{-2n} & 1 \end{pmatrix} \varphi(z)^{n \sigma_3} = \begin{pmatrix} e^{-i \frac{\pi}{2}\lambda_\nu} w(z)^{-\frac{1}{2}} & 0 \\ e^{i \frac{\pi}{2} \lambda_\nu} w(z)^{-\frac{1}{2}} & e^{i \frac{\pi}{2} \lambda_\nu} w(z)^{\frac{1}{2}}. \end{pmatrix}
\]
It follows that
\[
	\begin{pmatrix} Y_{11}(z) \\ Y_{12}(z) \end{pmatrix} = 2^{-n\sigma_3} R(z) E_{n, x_\nu}(z) \Psi_{\lambda_\nu}^{s}(n f_{x_\nu}(z)) \frac{1}{\sqrt{w(z)}} \begin{pmatrix} e^{-i \frac{\pi}{2} \lambda_\nu} \\ e^{i \frac{\pi}{2} \lambda_\nu} \end{pmatrix},
\]
where $ E_{n, x_\nu}(z) $ is defined as
\[
	E_{n,x_\nu}(z) = N(z) W_{x_\nu}(z)^{\sigma_3} e^{-\frac{\lambda_\nu}{4}\pi i \sigma_3} \varphi_+(x_\nu)^{n \sigma_3}  e^{-i\frac{\pi}{4}\sigma_3} \frac{1}{\sqrt{2}} \begin{pmatrix} 1 & i \\ i & 1 \end{pmatrix}.
\]
Now we aim to express $ \Psi_{\lambda_\nu}^{s}(n f_{x_\nu}(z)) \begin{pmatrix} e^{-i \frac{\pi}{2} \lambda_\nu} \\ e^{i \frac{\pi}{2} \lambda_\nu} \end{pmatrix} $ in terms of Bessel functions. Since $ H_{\gamma}^{(1)}(z) + H_{\gamma}^{(2)} = 2 J_{\gamma}(z) $, we have
\[
	\Psi_{\lambda_\nu}^{s}(z) \begin{pmatrix} e^{-i \frac{\pi}{2} \lambda_\nu} \\ e^{i \frac{\pi}{2} \lambda_\nu} \end{pmatrix} = \sqrt{\pi(-z)} \begin{pmatrix} e^{i \frac{3\pi}{4}} J_{\frac{\lambda_\nu + 1}{2}}(-z) \\ e^{-i \frac{\pi}{4}} J_{\frac{\lambda_\nu - 1}{2}}(-z) \end{pmatrix}.
\]
Now we let $ z \to x $ with $ x \in (x_\nu - \delta, x_\nu) $ and $ \operatorname{Im}(z) > 0 $. (\ref{f_x_nu_definition}) and $ \varphi_+(x) = \exp(i \arccos x) $ implies that $ (f_{x_\nu})_+(x) = \arccos x_\nu - \arccos x $. Therefore, we have
\begin{align*}
	\begin{pmatrix} Y_{11}(x) \\ Y_{12}(x) \end{pmatrix} = & \sqrt{\frac{\pi n}{2 w(x)}} 2^{-n \sigma_3} R_{+}(x) N_+(x) (W_{x_\nu })_+(x)^{\sigma_3} e^{-i \frac{\lambda_\nu}{4} \pi \sigma_3} \varphi_+(x_\nu)^{n\sigma_3}  e^{-i\frac{\pi}{4}\sigma_3} \begin{pmatrix} 1 & i \\ i & 1 \end{pmatrix} \\
	& \times (\arccos x - \arccos x_\nu)^{\frac{1}{2}} \begin{pmatrix} e^{i \frac{3\pi}{4}} J_{\frac{\lambda_\nu + 1}{2}}(n (\arccos x - \arccos x_\nu)) \\ e^{-i \frac{\pi}{4}} J_{\frac{\lambda_\nu - 1}{2}}(n (\arccos x - \arccos x_\nu)) \end{pmatrix}
\end{align*}
for all $ x \in (x_\nu - \delta, x_\nu) $, and this is precisely (\ref{Y_first_column_left_to_singularity}). (\ref{Y_first_column_right_to_singularity}) can be obtained with similar calculations.
\end{proof}

\begin{proposition}\label{proposition_strong_asymptotics_near_singularity}
For all $ \nu = 1, \dots, n_0 $,
\begin{equation}\label{OP_asymptotic_left_to_singularity}
\begin{aligned}
\pi_n(x) = & \frac{D_\infty}{2^n (1-x^2)^{\frac{1}{4}}} \sqrt{\frac{\pi n (\arccos x - \arccos x_\nu)}{w(x)}}  \\
	& \times \Big[ \Big( 1 + O(n^{-1}) \Big) \Big( \cos(\xi_{\nu, 1}(x) + \pi \lambda_\nu) J_{\frac{\lambda_\nu - 1}{2}}(n(\arccos x - \arccos x_\nu)) \\
	& \phantom{aaaaaaaaaaaaaaaa} - \sin(\xi_{\nu, 1}(x) + \pi \lambda_\nu) J_{\frac{\lambda_\nu + 1}{2}}(n(\arccos x - \arccos x_\nu)) \Big) \\
	& \phantom{aaaaaa} + O(n^{-1}) \Big( \cos(\xi_{\nu, 2}(x) + \pi \lambda_\nu) J_{\frac{\lambda_\nu - 1}{2}}(n(\arccos x - \arccos x_\nu)) \\ 
	& \phantom{aaaaaaaaaaaaaaaa} - \sin(\xi_{\nu, 2}(x) + \pi \lambda_\nu)J_{\frac{\lambda_\nu + 1}{2}}(n(\arccos x - \arccos x_\nu)) \Big) \Big]
\end{aligned}
\end{equation}
holds for $ x \in (x_\nu - \delta, x_\nu) $ and
\begin{equation}\label{OP_asymptotic_right_to_singularity}
\begin{aligned}
\pi_n(x) = & \frac{D_\infty}{2^n (1 - x^2)^{\frac{1}{4}}} \sqrt{\frac{\pi n (\arccos x_\nu - \arccos x)}{w(x)}} \\
	& \times \Big[ \Big( 1 + O(n^{-1}) \Big) \Big( \cos(\xi_{\nu ,1}(x)) J_{\frac{\lambda_\nu - 1}{2}}(n(\arccos x_\nu - \arccos x)) \\
	& \phantom{aaaaaaaaaaaaaaaa} + \sin(\xi_{\nu,1}(x)) J_{\frac{\lambda_\nu + 1}{2}}(n(\arccos x_\nu - \arccos x)) \Big) \\
	& \phantom{aaaaaa} + O(n^{-1}) \Big( \cos(\xi_{\nu,2}(x)) J_{\frac{\lambda_\nu - 1}{2}}(n(\arccos x_\nu - \arccos x)) \\ 
	& \phantom{aaaaaaaaaaaaaaaa} + \sin(\xi_{\nu,2}(x))J_{\frac{\lambda_\nu + 1}{2}}(n(\arccos x_\nu - \arccos x)) \Big) \Big]
\end{aligned}
\end{equation}
holds for $ x \in (x_\nu, x_\nu + \delta) $, where
\begin{equation}
	\xi_{\nu,k}(x) = \psi_{\nu}(x) - \frac{\pi}{4} \lambda_\nu - \frac{\pi}{4} + n \arccos x_\nu + (-1)^{k + 1} \frac{1}{2} \arccos x
\end{equation}
for $ k = 1, 2 $, and $ \psi_\nu(x) $ is defined by (\ref{psi_def}). Moreover, the error terms $ O(n^{-1}) $ hold uniformly for $ x \in (x_\nu - \delta, x_\nu] $ and $ x \in [x_\nu, x_\nu + \delta) $ respectively.
\end{proposition}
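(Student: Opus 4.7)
The plan is to extract $\pi_n(x) = Y_{11}(x)$ as the top entry of the formulas \eqref{Y_first_column_left_to_singularity} and \eqref{Y_first_column_right_to_singularity} supplied by the preceding lemma, and to rearrange the complex exponentials sitting inside $M^l_+(x)$ and $M^r_+(x)$ into the real trigonometric form of the statement. The main technical input is the uniform bound $R_+(x) = I + O(n^{-1})$ on the closed $\delta$-disk around $x_\nu$: the top row of $R_+(x)\cdot[\text{rest}]$ equals the top row of $[\text{rest}]$ plus $O(n^{-1})$ times its bottom row. Consequently the ``main term'' of the proposition comes from applying the Bessel-valued column in the lemma to the top row of the explicit product
\[
	N_+(x)\,(W_{x_\nu})_+(x)^{\sigma_3}\,e^{\mp i\lambda_\nu\pi\sigma_3/4}\,\varphi_+(x_\nu)^{n\sigma_3}\,e^{-i\pi\sigma_3/4}\begin{pmatrix}1&i\\i&1\end{pmatrix},
\]
and the $O(n^{-1})$ term from applying it to the bottom row of the same product.

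To evaluate these rows I use \eqref{D_boundary_value}, \eqref{m_boundary_value} and \eqref{W_x_nu_jumps}. The top row of $N_+(x)$ is proportional to $\bigl(e^{-i\pi/4}\varphi_+(x)^{1/2}D_+(x)^{-1},\, e^{i\pi/4}\varphi_+(x)^{-1/2}D_+(x)\bigr)$; the bottom row has the two powers of $\varphi_+(x)$ interchanged, and this swap is exactly what will turn the phase $+\tfrac{1}{2}\arccos x$ of $\xi_{\nu,1}(x)$ into the $-\tfrac{1}{2}\arccos x$ of $\xi_{\nu,2}(x)$. Two branch subtleties must be respected throughout the bookkeeping: for $x\in(x_\nu-\delta,x_\nu)$ the correct Szeg\H{o} boundary phase in \eqref{D_boundary_value} is $\psi_{\nu-1}(x)=\psi_\nu(x)+\tfrac{\pi}{2}\lambda_\nu$, not $\psi_\nu(x)$; and from the branch conventions in the definition of $W_{x_\nu}$ together with \eqref{W_x_nu_jumps} one reads off $(W_{x_\nu})_+(x)=e^{i\pi\lambda_\nu/2}\sqrt{w(x)}$ on the left and $(W_{x_\nu})_+(x)=e^{-i\pi\lambda_\nu/2}\sqrt{w(x)}$ on the right of $x_\nu$. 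With these in hand, the $w(x)$-factors cancel down to the $\sqrt{1/w(x)}$ of the lemma, and the total real phase in the exponentials becomes $\Theta^l_+(x)=\tfrac{1}{2}\arccos x+\psi_\nu(x)+\tfrac{3\pi\lambda_\nu}{4}+n\arccos x_\nu$ on the left, which satisfies $\Theta^l_+(x)-\tfrac{\pi}{4}=\xi_{\nu,1}(x)+\pi\lambda_\nu$; on the right, where the $\psi_{\nu-1}$--$\psi_\nu$ shift is absent and the $(W_{x_\nu})_+$ phase flips sign, the analogous phase satisfies $\Theta^r_+(x)-\tfrac{\pi}{4}=\xi_{\nu,1}(x)$, without the $+\pi\lambda_\nu$ summand.

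Multiplying by $\bigl(\begin{smallmatrix}1&i\\i&1\end{smallmatrix}\bigr)$ and acting on the Bessel column of the lemma, each of $J_{(\lambda_\nu\pm 1)/2}$ picks up a coefficient of the form $\alpha e^{i\Theta}+\beta e^{-i\Theta}$ with $|\alpha|=|\beta|$, and the elementary identities $\cos\Theta\pm\sin\Theta=\sqrt{2}\cos(\Theta\mp\pi/4)$ reduce these to either $\sqrt{2}\cos(\Theta-\pi/4)$ or $-\sqrt{2}\sin(\Theta-\pi/4)$, that is, exactly to $\cos(\xi_{\nu,1}+\pi\lambda_\nu)$ and $-\sin(\xi_{\nu,1}+\pi\lambda_\nu)$ on the left (and to $\cos\xi_{\nu,1}$ and $+\sin\xi_{\nu,1}$ on the right). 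The $\sqrt{2}$ produced here combines with the $1/\sqrt{2}$ already present in the $\sqrt{\pi n/(2w(x))}$ prefactor of the lemma to yield the $\sqrt{\pi n/w(x)}$ prefactor of \eqref{OP_asymptotic_left_to_singularity} and \eqref{OP_asymptotic_right_to_singularity}. The contribution of the bottom row of $N_+$, which only differs by the sign of $\tfrac12\arccos x$, gives the same expression with $\xi_{\nu,2}$ in place of $\xi_{\nu,1}$ and with an overall $O(n^{-1})$ coefficient that absorbs all bounded factors (including $D_\infty^{\pm 1}$); uniformity of the error in $x$ is inherited from that of $R_+(x)=I+O(n^{-1})$.

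The main obstacle is not any conceptual one but the strict phase bookkeeping: each factor ($D_+$, $(W_{x_\nu})_+$, $\varphi_+(x_\nu)^n$, the constants $e^{\mp i\lambda_\nu\pi\sigma_3/4}$ and $e^{-i\pi\sigma_3/4}$, and the Bessel column's own $e^{\pm ik\pi/4}$) contributes a half-integer multiple of $\pi\lambda_\nu$ or $\pi$, and a single misidentified branch (in particular the $\psi_\nu$ versus $\psi_{\nu-1}$ distinction across $x_\nu$, and the choice between $(z-x_\nu)^{\lambda_\nu/2}$ and $(x_\nu-z)^{\lambda_\nu/2}$ in the definition of $W_{x_\nu}$) would shift $\xi_{\nu,k}(x)$ by an unwanted multiple of $\pi\lambda_\nu/4$ or $\pi/4$ and spoil the exact match with the stated formulas.
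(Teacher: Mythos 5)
Your proposal is correct and follows essentially the same route as the paper: starting from the lemma's Bessel representation of the first column of $Y$, evaluating the boundary values of $N$, $D_+$ and $(W_{x_\nu})_+$ (with exactly the branch choices you flag: $\psi_{\nu-1}=\psi_\nu+\tfrac{\pi}{2}\lambda_\nu$ on the left and $(W_{x_\nu})_+=e^{\pm i\pi\lambda_\nu/2}\sqrt{w}$ on the two sides), collapsing the resulting phases into $\xi_{\nu,1}+\pi\lambda_\nu$, $\xi_{\nu,2}+\pi\lambda_\nu$ (left) and $\xi_{\nu,1}$, $\xi_{\nu,2}$ (right), and invoking the uniform estimate $R=I+O(n^{-1})$ so that the second row only contributes the $O(n^{-1})$ secondary term. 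The only differences are cosmetic (you regroup the various $e^{\pm i\pi/4}$ factors via $\cos\Theta\pm\sin\Theta=\sqrt{2}\cos(\Theta\mp\pi/4)$ rather than multiplying out $\bigl(\begin{smallmatrix}1&i\\i&1\end{smallmatrix}\bigr)$ directly as the paper does), and the phases you obtain agree with the paper's.
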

\begin{proof}
First assume that $ x \in (x_\nu - \delta, x_\nu) $. In order to show (\ref{OP_asymptotic_left_to_singularity}), we have to simplify $ M_+^l(x) $ in (\ref{Y_first_column_left_to_singularity}). According to (\ref{D_boundary_value}) and (\ref{m_boundary_value}), we have
\begin{equation}
\begin{aligned}
	N_+(x) = & D_{\infty}^{\sigma_3} \frac{e^{-i \frac{\pi}{4}}}{\sqrt{2}(1-x^2)^{\frac{1}{4}}} \begin{pmatrix} e^{i \frac{1}{2}\arccos x} & i e^{-i \frac{1}{2}\arccos x}\\ -i e^{-i \frac{1}{2}\arccos x} & e^{i \frac{1}{2}\arccos x} \end{pmatrix} \\
	& \times (\sqrt{w(x)} e^{-i\psi_{\nu - 1}(x)})^{-\sigma_3}.
\end{aligned}
\end{equation}
Because of (\ref{W_x_nu_jumps}), we have $ (D_+(x))^{-\sigma_3} (W_{x_\nu})_+(x)^{\sigma_3} = (e^{i \psi_{\nu - 1}(x) + i \frac{\pi}{2}\lambda_\nu})^{\sigma_3} $, therefore 
\begin{align*}
	N_+(x) & (W_{x_\nu })_+(x)^{\sigma_3} e^{-i \frac{\lambda_\nu}{4} \pi \sigma_3} \varphi_+(x_\nu)^{n \sigma_3} e^{-i \frac{\pi}{4}\sigma_3} \begin{pmatrix} 1 & i \\ i & 1 \end{pmatrix} \\
	& = D_\infty^{\sigma_3} \frac{e^{-i\frac{\pi}{4}}}{\sqrt{2}(1-x^2)^{\frac{1}{4}}} \begin{pmatrix} e^{i \xi_{\nu, 1}(x) + \pi \lambda_\nu} & ie^{-i \xi_{\nu, 1}(x) + \pi \lambda_\nu} \\ -i e^{i \xi_{\nu, 2}(x) + \pi \lambda_\nu} & e^{-i \eta_{\xi, 2}(x) + \pi \lambda_\nu} \end{pmatrix} \begin{pmatrix} 1 & i \\ i & 1 \end{pmatrix} \\
	& = D_\infty^{\sigma_3} \frac{e^{-i\frac{\pi}{4}}}{\sqrt{2}(1-x^2)^{\frac{1}{4}}} \begin{pmatrix} 2i \sin(\xi_{\nu, 1}(x) + \pi \lambda_\nu) & 2i \cos(\xi_{\nu, 1}(x) + \pi \lambda_\nu) \\ 2 \sin(\xi_{\nu, 2}(x) + \pi \lambda_\nu) & 2 \cos(\xi_{\nu, 2}(x) + \pi \lambda_\nu) \end{pmatrix}.
\end{align*}
Substituting this into (\ref{Y_first_column_left_to_singularity}), we obtain
\begin{align*}
\begin{pmatrix} Y_{11}(x) \\ Y_{12}(x) \end{pmatrix} & = \sqrt{\frac{\pi n}{w(x) }} \frac{(\arccos x - \arccos x_\nu)^{\frac{1}{2}}}{(1-x^2)^{\frac{1}{4}}} 2^{-n \sigma_3} R_+(x) D_\infty^{\sigma_3} \\
	& \times \begin{pmatrix}
		i \sin (\xi_{\nu, 1}(x) + \pi \lambda_\nu) & i \cos (\xi_{\nu, 1}(x) + \pi \lambda_\nu) \\
		\sin (\xi_{\nu, 2}(x) + \pi \lambda_\nu) & \cos (\xi_{\nu, 2}(x) + \pi \lambda_\nu) \\
	\end{pmatrix} \\
	& \times \begin{pmatrix}
		i J_{\frac{\lambda_\nu + 1}{2}}(n (\arccos x - \arccos x_\nu)) \\
		-i J_{\frac{\lambda_\nu - 1}{2}}(n (\arccos x - \arccos x_\nu))
	\end{pmatrix},
\end{align*}
which, by using $ R(z) = I + O(n^{-1}) $, where $ O(n^{-1}) $ is uniform in a small neighbourhood of $ x_\nu $ (see \cite[(3.30)]{Vanlessen}), gives (\ref{OP_asymptotic_left_to_singularity}). The formula (\ref{OP_asymptotic_right_to_singularity}) is obtained similarly.
\end{proof}

\section{Zeros around the algebraic singularity}\label{section_zeros}

In this section we study the zero spacing of the orthogonal polynomials near an algebraic singularity. First with the help of Proposition \ref{proposition_strong_asymptotics_near_singularity} we shall understand how the scaled polynomial $ \pi_n(x_0 + an^{-1}) $ behaves, then we shall obtain asymptotic formulas for the zeros around the singularity.

\begin{lemma}\label{lemma_scaling_behavior}
Let $ \xi \in (-1,1) $ and let $ a \in \mathbb{R} \setminus \{ 0 \} $ be fixed. Define $ a_n = \xi + a n^{-1} $ and $ \widetilde{a}_n = n (\arccos \xi - \arccos a_n) $. Then
\begin{equation}\label{a_tilde_n_behavior}
	\widetilde{a}_n = \frac{a}{\sqrt{1 - \xi^2}} + O\bigg( \frac{a^2}{n} \bigg)
\end{equation}
and
\begin{equation}\label{J_at_a_tilde_n_behavior}
	J_\lambda(|\widetilde{a}_n|) = J_\lambda \bigg( \frac{|a|}{\sqrt{1 - \xi^2}} \bigg) + O\bigg( \frac{|a|^{\lambda + 1}}{n} \bigg)
\end{equation}
holds.
\end{lemma}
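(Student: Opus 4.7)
The plan is to establish both estimates by straightforward Taylor expansion, treating the first as a calculus exercise on $\arccos$ and the second as a consequence of the first together with the known local behavior of $J_\lambda$ at the origin.

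For \eqref{a_tilde_n_behavior}, I would Taylor expand $\arccos$ at the fixed point $\xi \in (-1,1)$. Since $(\arccos)'(x) = -1/\sqrt{1-x^2}$ is smooth on a neighborhood of $\xi$, Taylor's theorem with Lagrange remainder gives
\[
\arccos(\xi + h) = \arccos \xi - \frac{h}{\sqrt{1-\xi^2}} + O(h^2),
\]
where the implied constant depends on $\xi$ only (it is controlled by $\sup |(\arccos)''|$ on a fixed compact neighborhood of $\xi$). Substituting $h = a/n$ and multiplying by $-n$ yields
\[
\widetilde{a}_n = n\big(\arccos \xi - \arccos a_n\big) = \frac{a}{\sqrt{1-\xi^2}} + O\!\left(\frac{a^2}{n}\right),
\]
which is \eqref{a_tilde_n_behavior}. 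A consequence I will record for later use is that $|\widetilde{a}_n| = |a|/\sqrt{1-\xi^2} + O(a^2/n)$, since for large enough $n$ the sign of $\widetilde{a}_n$ agrees with that of $a$.

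For \eqref{J_at_a_tilde_n_behavior}, I would apply the mean value theorem to $J_\lambda$ on the interval joining $|\widetilde{a}_n|$ and $|a|/\sqrt{1-\xi^2}$. This gives
\[
J_\lambda(|\widetilde{a}_n|) - J_\lambda\!\left(\frac{|a|}{\sqrt{1-\xi^2}}\right) = J_\lambda'(\zeta)\left(|\widetilde{a}_n| - \frac{|a|}{\sqrt{1-\xi^2}}\right)
\]
for some $\zeta$ between the two values. Using the representation $J_\lambda(x) = x^\lambda G_\lambda(x)$ with $G_\lambda$ entire (as recalled in the introduction), one computes $J_\lambda'(x) = \lambda x^{\lambda-1} G_\lambda(x) + x^\lambda G_\lambda'(x)$, so near $x = 0$ one has $|J_\lambda'(x)| = O(|x|^{\lambda-1})$, while on any bounded interval $|J_\lambda'|$ is bounded. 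In either regime, $|J_\lambda'(\zeta)| = O(|a|^{\lambda - 1})$ with a constant depending only on $\lambda$ and $\xi$, because $\zeta$ is within $O(a^2/n)$ of $|a|/\sqrt{1-\xi^2}$. Combining with $|\widetilde{a}_n - |a|/\sqrt{1-\xi^2}| = O(a^2/n)$ from the first part yields
\[
J_\lambda(|\widetilde{a}_n|) - J_\lambda\!\left(\frac{|a|}{\sqrt{1-\xi^2}}\right) = O\!\left(|a|^{\lambda-1} \cdot \frac{a^2}{n}\right) = O\!\left(\frac{|a|^{\lambda+1}}{n}\right),
\]
which is \eqref{J_at_a_tilde_n_behavior}.

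There is no real obstacle here; the only point requiring minor care is that the factor $|a|^{\lambda+1}$ (rather than the naive $|a|^2/n$) emerges precisely because $J_\lambda'$ has an integrable singularity of order $\lambda - 1$ at the origin for $\lambda \in (-1, 0)$, and this is exactly why the representation $J_\lambda(x) = x^\lambda G_\lambda(x)$ is invoked. The estimate is exactly what will be needed in the next section, where the scaled polynomial $\pi_n(x_\nu + an^{-1})$ is expressed, via Proposition \ref{proposition_strong_asymptotics_near_singularity}, in terms of $J_{(\lambda_\nu \pm 1)/2}$ evaluated at $\widetilde{a}_n$.
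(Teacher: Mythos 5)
Your proof is correct and follows essentially the same route as the paper: the first estimate via the Taylor expansion of $\arccos$ at $\xi$, and the second via the factorization $J_\lambda(z)=z^\lambda G_\lambda(z)$ (which the paper invokes directly, while you repackage it as a bound $|J_\lambda'(\zeta)|=O(|a|^{\lambda-1})$ fed into the mean value theorem — a minor variation, not a different method).
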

\begin{proof}
(\ref{a_tilde_n_behavior}) follows immediately from the Taylor expansion of $ \arccos x $ around $ \xi $. (\ref{J_at_a_tilde_n_behavior}) follows from the fact that $ J_\lambda(z) = z^{\lambda} G_\lambda(z) $, where $ G_\lambda(z) $ is an entire function.
\end{proof}

\begin{lemma}
Let $ a > 0 $. With the notations of Proposition \ref{proposition_strong_asymptotics_near_singularity}, we have
\begin{equation}\label{scaled_asymptotics_positive}
\begin{aligned}
	\pi_n\Big(x_\nu + \frac{a}{n}\Big) & = \frac{n^{\frac{\lambda_\nu}{2}}\pi D_\infty}{2^n \sqrt{(1 - x_\nu^2) h_\nu(x_\nu)}}  \\
	& \phantom{aaa} \times \Big[ a^{-\frac{\lambda_\nu - 1}{2}} \cos(\xi_{\nu, 1}(x_\nu)) J_{\frac{\lambda_\nu - 1}{2}}\big( a (1 - x_\nu^2)^{-\frac{1}{2}}\big) \\
	& \phantom{aaaaaaaa} + a^{-\frac{\lambda_\nu - 1}{2}} \sin(\xi_{\nu, 1}(x_\nu)) J_{\frac{\lambda_\nu + 1}{2}}\big(a (1 - x_\nu^2)^{-\frac{1}{2}}\big) + O(n^{-1}) \Big]
\end{aligned}
\end{equation}
and
\begin{equation}\label{scaled_asymptotics_negative}
\begin{aligned}
	\pi_n&\Big(x_\nu - \frac{a}{n}\Big) = \frac{n^{\frac{\lambda_\nu}{2}}\pi D_\infty}{2^n \sqrt{(1 - x_\nu^2) h_\nu(x_\nu)}} \\
	& \phantom{aaa} \times \Big[ a^{-\frac{\lambda_\nu - 1}{2}} \cos(\xi_{\nu, 1}(x_\nu) + \pi \lambda_\nu) J_{\frac{\lambda_\nu - 1}{2}}\big(a (1 - x_\nu^2)^{-\frac{1}{2}}\big) \\
	& \phantom{aaaaaaaa} + a^{-\frac{\lambda_\nu - 1}{2}} \sin(\xi_{\nu, 1}(x_\nu) + \pi \lambda_\nu) J_{\frac{\lambda_\nu + 1}{2}}\big(a (1 - x_\nu^2)^{-\frac{1}{2}}\big) + O(n^{-1}) \Big],
\end{aligned}
\end{equation}
where $ h_\nu(x) $ is defined by
\begin{equation}\label{h_nu}
	h_\nu(x) = h(x) (1 - x)^\alpha (1 + x)^\beta \prod_{\substack{k=1 \\ k \neq \nu}}^{n_0} |x - x_k|^{\lambda_k}
\end{equation}
and the error term $ O(n^{-1}) $ is uniform for $ a $ in compact subsets of the real line.
\end{lemma}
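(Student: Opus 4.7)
The plan is to substitute $x=x_\nu\pm a/n$ directly into the strong asymptotic formulas \eqref{OP_asymptotic_right_to_singularity} and \eqref{OP_asymptotic_left_to_singularity} of Proposition \ref{proposition_strong_asymptotics_near_singularity}, and then Taylor-expand every smooth quantity while using Lemma \ref{lemma_scaling_behavior} for the two quantities that depend on $n$ through the scaled argument, namely the factor $\sqrt{n(\arccos x_\nu-\arccos x)}$ and the two Bessel values. I will treat the positive case $x=x_\nu+a/n$; the negative case is entirely parallel.

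First I decompose the prefactor $\frac{D_\infty}{2^n(1-x^2)^{1/4}}\sqrt{\pi n(\arccos x_\nu-\arccos x)/w(x)}$ into three pieces. The factor $(1-x^2)^{-1/4}$ evaluated at $x=x_\nu+a/n$ equals $(1-x_\nu^2)^{-1/4}(1+O(n^{-1}))$ by direct Taylor expansion. Writing the weight locally as $w(x)=h_\nu(x)|x-x_\nu|^{\lambda_\nu}$ with $h_\nu$ as in \eqref{h_nu}, the singular factor at $x=x_\nu+a/n$ contributes $(a/n)^{\lambda_\nu}$, which when extracted from the square root produces the announced $n^{\lambda_\nu/2}a^{-\lambda_\nu/2}$, while $h_\nu(x_\nu+a/n)^{-1/2}=h_\nu(x_\nu)^{-1/2}(1+O(n^{-1}))$ since $h_\nu$ is analytic and strictly positive at $x_\nu$. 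Finally, \eqref{a_tilde_n_behavior} gives $\widetilde a_n=n(\arccos x_\nu-\arccos(x_\nu+a/n))=a/\sqrt{1-x_\nu^2}+O(a^2/n)$, so $\sqrt{n(\arccos x_\nu-\arccos x)}=a^{1/2}(1-x_\nu^2)^{-1/4}(1+O(n^{-1}))$. Multiplying these three pieces together yields the coefficient claimed in \eqref{scaled_asymptotics_positive}, the factor $a^{-(\lambda_\nu-1)/2}=a^{1/2-\lambda_\nu/2}$ being regrouped into the bracket with the Bessel values.

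For the bracket itself I use the smoothness of $\psi_\nu$ away from the singularities and endpoints, which makes $\xi_{\nu,k}$ smooth at $x_\nu$, so that $\cos\xi_{\nu,k}(x_\nu+a/n)=\cos\xi_{\nu,k}(x_\nu)+O(n^{-1})$ and the same for sine. The Bessel values are controlled by \eqref{J_at_a_tilde_n_behavior}, which gives $J_\gamma(\widetilde a_n)=J_\gamma(a/\sqrt{1-x_\nu^2})+O(a^{\gamma+1}/n)$ for $\gamma=(\lambda_\nu\pm 1)/2>-1$; the secondary $O(n^{-1})$ summand in Proposition \ref{proposition_strong_asymptotics_near_singularity} only contributes a lower order correction that can be absorbed into the overall error. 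For the negative case one substitutes into \eqref{OP_asymptotic_left_to_singularity}: the Bessel argument $n(\arccos(x_\nu-a/n)-\arccos x_\nu)$ again tends to the positive value $a/\sqrt{1-x_\nu^2}$, and the shift $\pi\lambda_\nu$ inside the trigonometric functions is inherited directly from the phase appearing in Proposition \ref{proposition_strong_asymptotics_near_singularity}, which itself reflects the different branch of $W_{x_\nu}$ chosen on the two sides of the singularity via \eqref{W_x_nu_jumps}.

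The one step that needs genuine care is the uniformity of the remainder. All Taylor expansions used (for $\arccos$, $(1-x^2)^{1/4}$, $h_\nu$, and the phases $\xi_{\nu,k}$) produce remainders that depend polynomially on $a$ and are $O(n^{-1})$. The Bessel remainder $a^{\gamma+1}/n$ and the remainder $O(a^2/n)$ from \eqref{a_tilde_n_behavior} stay bounded on compact subsets of $(0,\infty)$, so after multiplying by the overall $a^{-(\lambda_\nu-1)/2}$ the combined error remains $O(n^{-1})$ uniformly for $a$ in such compact sets. The main obstacle is therefore nothing conceptual but rather the bookkeeping of powers of $a$, $n$, and $\sqrt{1-x_\nu^2}$, together with the sign conventions in the trigonometric shifts; no new analytic input beyond Proposition \ref{proposition_strong_asymptotics_near_singularity} and Lemma \ref{lemma_scaling_behavior} is required.
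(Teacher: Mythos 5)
Your proposal follows essentially the same route as the paper's own proof: substitute $x = x_\nu \pm a/n$ into Proposition \ref{proposition_strong_asymptotics_near_singularity}, Taylor-expand the smooth factors ($ (1-x^2)^{-1/4} $, $ h_\nu $, the phases $\xi_{\nu,k}$), and control the $n$-dependent quantities $\widetilde a_n$ and $J_\gamma(\widetilde a_n)$ via Lemma \ref{lemma_scaling_behavior}, with the same bookkeeping that extracts $n^{\lambda_\nu/2}a^{-(\lambda_\nu-1)/2}$ from the prefactor. The only ingredient the paper adds that you omit is the boundedness of $a^{-\lambda}J_\lambda(a)$ near $a=0$ (cited from Minakshisundaram--Sz\'asz), which is what upgrades your uniformity on compact subsets of $(0,\infty)$ to uniformity on compact sets containing $a=0$, as claimed in the statement.
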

\begin{proof}
According to the notations of Lemma \ref{lemma_scaling_behavior}, let $ a > 0 $ and define $ a_n = x_\nu + \frac{a}{n} $ with $ \widetilde{a}_n = n(\arccos x_\nu - \arccos a_n) $. For convenience, we shall write $ w(x) = h_\nu(x) |x - x_\nu|^{\lambda_\nu} $, where $ h_\nu(x) $ is defined by (\ref{h_nu}) and it is smooth at $ x_\nu $.

First notice that if $ f(x) $ is any smooth function, then $ f(a_n) = f(x_\nu) + O(a n^{-1}) $, which follows easily from the Taylor expansion around $ x_\nu $. According to this and (\ref{a_tilde_n_behavior}), the first term of (\ref{OP_asymptotic_right_to_singularity}) can be written as
\begin{equation}\label{scaled_asymptotics_1}
	\frac{D_\infty}{2^n (1 - a_n^2)^{\frac{1}{4}}} \sqrt{\frac{\pi \widetilde{a}_n}{w(a_n)}} = (1 + o(1)) \frac{n^{\frac{\lambda_\nu}{2}} \pi D_\infty a^{-\frac{\lambda_\nu - 1}{2}}}{2^n \sqrt{(1 - x_\nu^2) h_\nu(x_\nu)} }.
\end{equation}
Now (\ref{a_tilde_n_behavior}) and (\ref{J_at_a_tilde_n_behavior}), part of the second term which involves trigonometric functions and Bessel functions can be written as
\begin{equation}\label{scaled_asymptotics_2}
	\cos(\xi_{\nu , k}(a_n)) J_{\frac{\lambda_\nu - 1}{2}}(\widetilde{a}_n) = \cos(\xi_{\nu, k}(x_\nu)) J_{\frac{\lambda_\nu - 1}{2}}\big( a (1 - x_\nu^2)^{-\frac{1}{2}}\big) + O\bigg( \frac{a^{\frac{\lambda_\nu + 1}{2}}}{n} \bigg)
\end{equation}
and
\begin{equation}\label{scaled_asymptotics_3}
	\sin(\xi_{\nu , k}(a_n)) J_{\frac{\lambda_\nu + 1}{2}}(\widetilde{a}_n) = \sin(\xi_{\nu, k}(x_\nu)) J_{\frac{\lambda_\nu + 1}{2}}\big( a (1 - x_\nu^2)^{-\frac{1}{2}}\big) + O\bigg( \frac{a^{\frac{\lambda_\nu + 3}{2}}}{n} \bigg),
\end{equation}
where $ k = 1, 2 $. Note that since $ \lambda_\nu > -1 $, the error terms are uniform for $ a $ in compact subsets of $ [0,\infty) $. Using that $ a^{-\lambda} J_\lambda(a) $ is bounded for $ a $ in compact neighborhoods of $ 0 $, which is implied by the fact that $ J_\lambda(z) = z^\lambda G(z) $ where $ G(z) $ is an entire function (in fact, it is bounded by $ (2^{\lambda} \Gamma(\lambda+1))^{-1} $, see \cite[Corollary]{Minakshisundaram-Szasz}), (\ref{scaled_asymptotics_1}) - (\ref{scaled_asymptotics_3}) gives (\ref{scaled_asymptotics_positive}), which is what we wanted to prove. (\ref{scaled_asymptotics_negative}) can be obtained similarly.
\end{proof}

Now, as implied by (\ref{scaled_asymptotics_positive}), the $ k $-th scaled zero 
\begin{equation}\label{scaled_zeros}
	a_{k,n}(x_\nu) = \frac{n}{\sqrt{1 - x_\nu^2}} (x_{k,n}(x_\nu) - x_\nu)
\end{equation}
to the right of $ x_\nu $ is the $ k $-th solution of the equation
\begin{equation}\label{zero_equation}
\begin{aligned}
	\cos(n& \arccos x_\nu + \varphi_\nu) a^{- \frac{\lambda_\nu - 1}{2}} J_{\frac{\lambda_\nu - 1}{2}}(a) \\ & + \sin(n \arccos x_\nu + \varphi_\nu) a^{- \frac{\lambda_\nu - 1}{2}} J_{\frac{\lambda_\nu + 1}{2}}(a) + O(n^{-1}) = 0,
\end{aligned}
\end{equation}
where the phase $ \varphi_\nu $ is defined by
\[
	\varphi_\nu = \psi_\nu(x_\nu) - (1 + \lambda_\nu)\frac{\pi}{4} + \frac{1}{2} \arccos x_\nu.
\]

Before proceeding with the study of the zeros, we need a lemma about the simplicity of the zeros of $ \psi_{a,c,d}(x) = cJ_{a}(x) + dJ_{a+1}(x) $.

\begin{lemma}\label{Bessel_equation_simple_roots_lemma}
Let $ c, d \in \mathbb{R} $ and assume that $ a > -1 $. Then the solutions of the equation
\begin{equation}\label{Bessel_model_equation}
	 cJ_{a}(x) + dJ_{a+1}(x) = 0
\end{equation}
are real and simple. (Recall that these solutions were denoted with $ j_k(a, c, d) $ for all $ k > 1 $ according to the conventions in (\ref{Bessel_equation_solutions}) and below.)
\end{lemma}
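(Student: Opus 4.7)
The plan is to handle reality and simplicity of the zeros by separate methods.

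For reality, I would introduce $v(z):=z^{-a}J_a(z)$; since $J_a(z)=z^a G_a(z)$ with $G_a$ entire, $v$ is an even entire function with real Taylor coefficients whose (simple, real) zeros are $\pm j_{a,k}$, $k\geq 1$, and hence admits a convergent Hadamard product
\[
	v(z)=v(0)\prod_{k\geq 1}\Big(1-\frac{z^2}{j_{a,k}^2}\Big).
\]
Using the derivative identity $\frac{d}{dz}[z^{-a}J_a(z)]=-z^{-a}J_{a+1}(z)$, equation \eqref{Bessel_model_equation} becomes $cv(z)=dv'(z)$. The case $v(z_0)=0$ forces $d=0$ by simplicity of the zeros of $v$, leaving only the (real) zeros of $J_a$. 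Otherwise I rewrite the equation as $v'(z_0)/v(z_0)=c/d$ and logarithmically differentiate the product to obtain
\[
	\frac{v'(z)}{v(z)}=\sum_{k=1}^\infty\Big(\frac{1}{z-j_{a,k}}+\frac{1}{z+j_{a,k}}\Big),
\]
whose imaginary part at $z=x+iy$ equals $-y$ times a strictly positive series. Since $c/d\in\mathbb{R}$, this forces $y=0$, so $z_0\in\mathbb{R}$.

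For simplicity, assume $x_0\in\mathbb{R}\setminus\{0\}$ is a double zero. Strict interlacing of the positive zeros of $J_a$ and $J_{a+1}$ gives $(J_a(x_0),J_{a+1}(x_0))\neq(0,0)$, so the two equations $cJ_a(x_0)+dJ_{a+1}(x_0)=0$, $cJ'_a(x_0)+dJ'_{a+1}(x_0)=0$ admit a nontrivial $(c,d)$ only when the Wronskian
\[
	W(x):= J_a(x)J'_{a+1}(x)-J'_a(x)J_{a+1}(x) = J_a(x)^2+J_{a+1}(x)^2-\frac{2a+1}{x}J_a(x)J_{a+1}(x)
\]
vanishes at $x_0$; the second equality follows from the Bessel recurrences $J'_a=(a/x)J_a-J_{a+1}$ and $J'_{a+1}=J_a-\frac{a+1}{x}J_{a+1}$. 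On the region $|x|\geq|2a+1|/2$, the AM-GM inequality $J_a^2+J_{a+1}^2\geq 2|J_aJ_{a+1}|$ combined with $|2a+1|/|x|\leq 2$ yields $W>0$, with the strict interlacing handling the equality cases.

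The delicate region is $0<|x|<|2a+1|/2$. My plan here is to analyse $\Phi(x):=-xW(x)$; a short computation using the recurrences causes the expression for $\Phi'$ to collapse into the compact form
\[
	\Phi'(x)=-\frac{2a+1}{x}J_a(x)J_{a+1}(x).
\]
Since this interval lies inside $(0,j_{a,1})$, where $J_a,J_{a+1}>0$, the function $\Phi$ is strictly monotone on $(0,|2a+1|/2]$. The sign is then pinned down either by the leading-order expansion $\Phi(x)\sim -x^{2a+1}/[2^{2a+1}(a+1)\Gamma(a+1)^2]$ as $x\to 0^+$ (in the case $a>-1/2$) or by the endpoint identity $\Phi(|2a+1|/2)=\frac{2a+1}{2}(J_a(|2a+1|/2)+J_{a+1}(|2a+1|/2))^2$ (in the case $a<-1/2$); either gives $\Phi<0$ throughout. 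The borderline case $a=-\frac{1}{2}$ is immediate since $\Phi\equiv -2/\pi$, and negative $x$ is handled by the symmetries $v(-x)=v(x)$, $v'(-x)=-v'(x)$. The main obstacle is precisely this small-$|x|$ regime, since the AM-GM bound is saturated exactly at $|x|=|2a+1|/2$ and a finer identity is needed; the compact form of $\Phi'$ is what powers the monotonicity argument.
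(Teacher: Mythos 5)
Your reality argument is correct and self-contained: working with the even entire function $v(z)=z^{-a}J_a(z)$, the identity $\frac{d}{dz}\big[z^{-a}J_a(z)\big]=-z^{-a}J_{a+1}(z)$, the classical product over the real simple zeros $\pm j_{a,k}$, and the sign of the imaginary part of the Mittag--Leffler expansion is a legitimate, more elementary replacement for the paper's appeal to (and extension of) the Ismail--Muldoon theorem. The simplicity half is where the proposal has a genuine gap. Your monotonicity step in the ``delicate region'' rests on the claim that $(0,|2a+1|/2)\subset(0,j_{a,1})$, so that $J_a J_{a+1}>0$ there and $\Phi'=-\frac{2a+1}{x}J_aJ_{a+1}$ has fixed sign. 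For $a\ge-\frac12$ this containment is true but not free: it amounts to the nontrivial bound $j_{a,1}>a+\frac12$ (e.g. via $j_{a,1}^2\ge(a+1)(a+3)$), which you assert tacitly; the cheap Rayleigh bound $j_{a,1}>2\sqrt{a+1}$ already fails for $a\approx4$. Worse, for $-1<a<-\frac12$ the claim is simply false near $a=-1$: from the Rayleigh sums one has $j_{a,1}^2\le 4(a+1)(a+2)\to0$ as $a\to-1^{+}$, while $|2a+1|/2\to\frac12$ (e.g. $a=-0.99$ gives $j_{a,1}<0.21<0.49$). So $j_{a,1}$ lies inside your interval, $\Phi$ is increasing and then decreasing there, and since your endpoint identity only gives $\Phi(|2a+1|/2)\le0$ (and AM--GM only gives $W\ge0$ at that point), the argument as written does not exclude $W(x_0)=0$.

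The gap is repairable within your own framework. At any zero of $J_a$ one has $\Phi=-xJ_{a+1}^2<0$, and at any zero of $J_{a+1}$ one has $\Phi=-xJ_a^2<0$; since $\Phi$ is monotone between consecutive zeros of $J_aJ_{a+1}$, and $\Phi\to0^{-}$ or $-\infty$ as $x\to0^{+}$ while $\Phi\to-\frac{2}{\pi}$ as $x\to\infty$, one gets $\Phi<0$ on all of $(0,\infty)$ without any AM--GM case split or zero bounds. Even more directly, the Mittag--Leffler expansion you already wrote down gives $\big(v'/v\big)'<0$ off the zeros of $v$, i.e. $v(x)v''(x)-v'(x)^2<0$ for all real $x\ne0$ (at a zero of $v$ this quantity is $-v'(x)^2<0$), and a one-line computation shows $v v''-(v')^2=-x^{-2a}W(x)$ for $x>0$; this Laguerre-type inequality kills double zeros at once and is essentially the paper's own route, which applies the Hadamard product and the strict decrease of the logarithmic derivative to $\varphi_{a,c,d}$ itself rather than to $v$.
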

\begin{proof}
The case when either $ c $ or $ d $ are zero is well-known, so we do not lose from generality if we suppose that $ c, d \neq 0 $. We shall prove that when $ c, d \neq 0 $ and $ a > -1 $, the function
\[
	\psi_{a, c, d}(x) = c J_{a}(x) + d J_{a + 1}(x)
\]
has only real and simple zeros. In view of the corresponding recurrence relation $ xJ_{\nu}^\prime(x)=\nu J_{\nu}(x)-xJ_{\nu+1}(x) $, this statement is equivalent to the following: when $ c, d \neq 0 $ and $ a > -1 $, the function $ x \mapsto (ad + cx) J_a(x) - dxJ_a^\prime(x) $ has only real and simple zeros. According to Ismail and Muldoon \cite[Theorem 2.2]{Ismail-Muldoon}, the function $ z \mapsto (\alpha + \delta z) J_\nu(z) + (\beta + \tau z) J_\nu^\prime(z) $ has only real zeros if $ \beta = 0 $, $ \nu > -1 $ and $ \nu > -\alpha \tau^{-1} $. It can be shown by following the proof of \cite[Theorem 2.2]{Ismail-Muldoon} that the above result can be extended to the case when $ \beta = 0 $, $ \nu > -1 $ and $ \nu \geq -\alpha \tau^{-1} $. Now, choosing $ \alpha = ad, \delta = c, \beta = 0 $ and $ \tau = -d $, we get that $ x \mapsto (ad + cx)J_a(x) - dxJ_a^\prime(x) $ has only real zeros when $ a > -1 $.

Now, we shall prove that these zeros are simple. For this we denote the expression $ (ad + cx)J_a(x) - dxJ_a^\prime(x) $ by $ \varphi_{a,c,d}(x) $. The power series
\[
	\frac{2^a}{c}\Gamma(a+1) x^{-a-1}\varphi_{a,c,d}(x) = \sum_{n \geq 0} \frac{(-1)^n x^{2n}}{2^{2n} n! (a+1)_n} + \frac{d}{c} \sum_{n \geq 0} \frac{\Gamma(a+1)(-1)^n x^{2n+1}}{2^{2n-1}n!\Gamma(a+n+2)}
\]
is an entire function, which has growth order $ \frac{1}{2} $. Here $ (a)_n = a(a+1) \dots (a+n-1) $ is the so-called Pochhammer symbol. Thus, by using the Hadamard factorization theorem it can be shown that
\[
	\frac{2^a}{c} \Gamma(a+1)x^{-a-1}\varphi_{a,c,d}(x) = \prod_{k \geq 1} \bigg( 1 - \frac{x}{j_k(a,c,d)} \bigg),
\]
where $ j_k(a,c,d) $ denotes the $ n $-th zero of $ \varphi_{a,c,d}(x) $, see \eqref{Bessel_equation}-\eqref{Bessel_equation_solutions}. Taking the logarithmic derivative of both sides of the above relation we obtain
\[
	\frac{\varphi_{a,c,d}^\prime(x)}{\varphi_{a,c,d}(x)} = \frac{a+1}{x} + \sum_{k \geq 1} \frac{1}{x - j_k(a,c,d)}
\]
and
\[
	\bigg( \frac{\varphi_{a,c,d}^{\prime}(x)}{\varphi_{a,c,d}(x)} \bigg)^\prime = - \frac{a+1}{x^2} - \sum_{n \geq 1} \frac{1}{(x - j_k(a,c,d))^2} < 0
\]
for each $ a > -1 $ and each $ x $ which does not coincide with the zeros of $ \varphi_{a,c,d}(x) $. In the view of the limits
\[
	\lim_{x \searrow j_k(a,c,d)} \frac{\varphi_{a,c,d}^\prime(x)}{\varphi_{a,c,d}(x)} = \infty \quad \text{and} \quad \lim_{x \nearrow j_k(a,c,d)} \frac{\varphi_{a,c,d}^\prime(x)}{\varphi_{a,c,d}(x)} = -\infty
\]
and taking into account that the quotient $ \frac{\varphi_{a,c,d}^\prime(x)}{\varphi_{a,c,d}(x)} $ is strictly decreasing\footnote{It is important to mention here that the reality of the zeros of $ \varphi_{a,c,d}(x) $ and the infinite product actually imply that the function $ x \mapsto \frac{2^a}{c}\Gamma(a+1)x^{-a-1}\varphi_{a,c,d}(x) $ belongs to the Laguerre-P\'olya class of real entire functions, and hence satisfies the so-called Laguerre inequality. The decreasing property of the logarithmic derivative of $ \varphi_{a,c,d}(x) $ is a consequence of this inequality.} on each interval $ (j_k(a,c,d),j_{k+1}(a,c,d)) $, we obtain that the zeros of $ \varphi_{a,c,d}(x) $ and $ \varphi_{a,c,d}^\prime(x) $ are interlacing, which implies that the zeros of $ \varphi_{a,c,d}(x) $ are all simple.

\end{proof}

Now we are ready to prove our main theorem. \\

\noindent \textbf{Proof of Theorem \ref{main_theorem_1}.} We only prove the theorem for $ k = 0, 1, 2, \dots  $, the rest follows similarly, only using the asymptotic formula for the other side.  First we start with the proof of (a), then we use a similar argument to prove (b). \\

\textit{(a)} Let $ \arccos x_\nu = \pi \frac{p}{q} $ and $ n_l = q l + m $ for $ m \in \{ 0, 1, \dots, q-1 \} $ fixed. Because of the periodicity of the trigonometric terms in the equation
\begin{align*}
	\cos(n_l \arccos x_\nu + \varphi_\nu) a^{- \frac{\lambda_\nu - 1}{2}} J_{\frac{\lambda_\nu - 1}{2}}(a) + \sin(n_l \arccos x_\nu + \varphi_\nu) a^{- \frac{\lambda_\nu - 1}{2}} J_{\frac{\lambda_\nu + 1}{2}}(a) = 0,
\end{align*}
its solutions are the same for each $ l $. (The equations are not necessarily the same, but they are identical up to the multiplicative factor $ -1 $.) Therefore we define
\begin{align*}
	c_m = \cos(m \arccos x_\nu + \varphi_\nu), \quad d_m = \sin(m \arccos x_\nu + \varphi_\nu).
\end{align*}
Lemma \ref{Bessel_equation_simple_roots_lemma} says that the solutions  $ j_k(\lambda_\nu, c_m, d_m) $ are simple. Now, the scaled zeros $ a_{k,n_l}(x_0) $, which are the solutions of
\begin{align*}
	c_m a^{-\frac{\lambda_\nu - 1}{2}} J_{\frac{\lambda_\nu - 1}{2}}(a) + d_m  a^{-\frac{\lambda_\nu - 1}{2}} J_{\frac{\lambda_\nu + 1}{2}}(a) + O(n_l^{-1}) = 0,
\end{align*}
cannot be too close or too far from each other, that is, there is a constant $ C $ such that
\[
	\frac{1}{C} \leq a_{k+1,n_l}(x_0) - a_{k,n_l}(x_0) \leq C.
\]
This is a consequence of \cite[Theorem 1.1]{Varga} and the fact that the measure $ \mu $ is locally doubling around $ x_0 $. Because of these observations, it follows that by adding the error term $ O(n_l^{-1}) $ to the equation $ c_m a^{-\frac{\lambda_\nu - 1}{2}} J_{\frac{\lambda_\nu - 1}{2}}(a) + d_m  a^{-\frac{\lambda_\nu - 1}{2}} J_{\frac{\lambda_\nu + 1}{2}}(a) $, the existing roots are slightly perturbed, but we obtain no new roots otherwise. Thus, we have
\[
	a_{k,n_l}(x_0) = j_k\Big(\frac{\lambda_\nu - 1}{2}, c_m, d_m\Big) + o(1).
\]
It follows that
\begin{align*}
	\frac{n_l}{\sqrt{1 - x_\nu^2}}(x_\nu) & (x_{k+1,n_l}(x_\nu) - x_{k,n_l}(x_\nu)) \\
	& = j_{k+1}\Big(\frac{\lambda_\nu - 1}{2}, c_m, d_m\Big) - j_k\Big(\frac{\lambda_\nu - 1}{2}, c_m, d_m\Big) + o(1),
\end{align*}
which is what we needed to prove. \\

\textit{(b)} Now let $ c, d \in \mathbb{R} $ be arbitrary. The solutions of (\ref{Bessel_equation}) are left invariant if we multiply $ c $ and $ d $ with a fixed constant, therefore we can assume without the loss of generality that $ c^2 + d^2 = 1 $. Since $ \arccos x_\nu $ is not a rational multiple of $ \pi $, there exists a subsequence $ n_l $ such that
\[
	\big|\cos(n_l \arccos x_\nu + \varphi_\nu) - c\big| \leq \frac{1}{l}
\]
and
\[
	\big|\sin(n_l \arccos x_\nu + \varphi_\nu) - d\big| \leq \frac{1}{l}.
\]
Because of this and Lemma \ref{Bessel_equation_simple_roots_lemma}, the $ k $-th solution $ j_k(\frac{\lambda_\nu -1}{2}, c, d) $ of
\[
	c a^{-\frac{\lambda_\nu - 1}{2}} J_{\frac{\lambda_\nu - 1}{2}}(a) + d a^{-\frac{\lambda_\nu - 1}{2}} J_{\frac{\lambda_\nu + 1}{2}}(a) = 0
\]
 is of $ o(1) $ distance from the $ k $-th solution of
\[
	\cos(n_l \arccos x_\nu + \varphi_\nu) a^{-\frac{\lambda_\nu - 1}{2}} J_{\frac{\lambda_\nu - 1}{2}}(a) + \sin(n_l \arccos x_\nu + \varphi_\nu) a^{-\frac{\lambda_\nu - 1}{2}} J_{\frac{\lambda_\nu + 1}{2}}(a) = 0.
\]
Following the argument in the proof of (a), we easily obtain that
\begin{align*}
	\frac{n_l}{\sqrt{1 - x_\nu^2}}(x_\nu) & (x_{k+1,n_l}(x_\nu) - x_{k,n_l}(x_\nu)) \\
	& = j_{k+1}\Big(\frac{\lambda_\nu -1}{2}, c, d\Big) - j_k\Big(\frac{\lambda_\nu -1}{2}, c, d\Big) + o(1),
\end{align*}
which completes the proof of our main theorem. \begin{flushright} $ \Box $ \end{flushright}

\section{Zeros of the function $ c J_{a}(x) + d J_{a+1}(x) $}\label{Bessel_linear_combination_zeros}

In this section we study the zeros of the special function
\[
	\psi_{a,c,d}(x) = c J_{a}(x) + d J_{a+1}(x), 
\]
in order to prove estimates regarding the limiting behavior of the zeros. Recall that the positive roots of the equation $ \psi_{a,c,d}(x) $ are
\[
	0 < j_1(a,c,d) < j_2(a,c,d) < \dots,
\]
see (\ref{Bessel_equation})-(\ref{Bessel_equation_solutions}). Our strategy is to find a second order linear differential equation which is satisfied by $ \psi_{a,c,d}(x) $, write it in a Sturm-Liouville normal form, then use the Sturm comparison and convexity theorems, see \cite[Theorem 1]{Deano-Gil-Segura}). First we prove Theorem \ref{zero_convexity_theorem}, which is an application of the Sturm convexity theorem. \\

\noindent \textbf{Proof of Theorem \ref{zero_convexity_theorem}.} Using that $ J_a(x) $ satisfies the Bessel differential equation
\[
	x^2 y^{\prime \prime}(x) + xy^\prime(x) + (x^2 - a^2)y(x) = 0,
\]
it is possible to show that the special function $\psi_{a,c,d}(x)=cJ_a(x)+dJ_{a+1}(x) $ satisfies the following second-order homogeneous ordinary differential equation\footnote{The author \'A. Baricz is very grateful to Christoph Koutchan for deducing this differential equation with his Holonomic Functions Package.}
\begin{equation}\label{eqCK}
	u(x)y''(x)+v(x)y'(x)+w(x)y(x)=0,
\end{equation}
where
\[
	u(x)=(2a+1)cdx^2+(c^2+d^2)x^3,\ \ v(x)=2(2a+1)cdx+(c^2+d^2)x^2,
\]
\[
	w(x)=-a(a+1)(2a+1)cd-(a^2c^2+d^2+2ad^2+a^2d^2)x+(2a+1)cdx^2+(c^2+d^2)x^3.
\]
Equation \eqref{eqCK} can be rewritten as
\[
	y''(x)+B(x)y'(x)+A(x)y(x)=0,
\]
where $ B(x)= v(x) u(x)^{-1} $ and $ A(x)=w(x) u(x)^{-1} $, and it can be shown that the function
$ z(x)=\exp \big( \frac{1}{2}\int_{0}^{x}B(s)ds \big) y(x) $ satisfies the second-order differential equation
\[
	z''(x)+C(x)z(x)=0,\ \ \ C(x)=A(x)-\frac{1}{2}B'(x)-\frac{1}{4}B^2(x),
\]
which is in normal form and it is suitable for the technique of the Sturm theory. We have to differentiate between the cases $ a \in (-\frac{1}{2}, 0] $, $ a \in (0,\frac{1}{2}) $ and $ a \in [\frac{1}{2}, \infty) $. \\

\textit{(i) The case when $ a \in [\frac{1}{2}, \infty) $ and $ c $ has the same sign as $ d $.} In this case $ C(x) $ can be written as
\begin{equation}\label{C_definition}
	C(x)=\frac{a_0x^4+a_1x^3+a_2x^2+a_3x+a_4}{4x^2((c^2+d^2)x+(2a+1)cd)^2},
\end{equation}
where
\[
	a_0=4(c^2+d^2)^2, \ \ a_1=8(2a+1)cd(c^2+d^2),
\]
\[
	a_2=(2a+1)(d^2-c^2)(2a(c^2-d^2)-c^2-3d^2),
\]
\[
	a_3=-4cd(2a+1)(2a^2(c^2+d^2)+a(c^2+3d^2)-c^2),
\]
\[
	a_4=-4ac^2d^2(a+1)(2a+1)^2.
\]
Now, if we compute the derivative of $ C(x) $ we obtain
\[
	C'(x)=\frac{(2a+1)(b_0x^3+b_1x^2+b_2x+b_3)}{2x^3((c^2+d^2)x+(2a+1)cd)^3},
\]
where
\[
	b_0=(c^2+d^2)^2(2a(c^2+d^2)-c^2+3d^2),
\]
\[
	b_1=6cd(c^2+d^2)(2a^2(c^2+d^2)+a(c^2+3d^2)-c^2),
\]
\[
	b_2=2(2a+1)c^2d^2(6a^2(c^2+d^2)+a(5c^2+7d^2)-c^2),
\]
\[
	b_3=4a(a+1)(2a+1)^2c^3d^3.
\]
Now, since $ c $ and $ d $ have the same sign and $ a\geq\frac{1}{2} $, the coefficients $ b_0, b_1, b_2 $ and $ b_3 $ are all positive, and consequently under the same assumptions the function $ C $ is strictly increasing on $ (0,\infty) $. The above result together with the Sturm comparison and convexity theorem (see for example \cite[Theorem 1]{Deano-Gil-Segura}) imply that when $ c $ and $ d $ are real with the same sign, $ a \geq \frac{1}{2} $ and $ k \in \mathbb{N} $ we have \eqref{zero_convexity_1}. \\

\textit{(ii) The case when $a\in\left(0,\frac{1}{2}\right)$ and $ c $ has the same sign as $ d $.} It is straightforward to check that given a function $y(x),$ which is a solution of
\begin{equation}\label{eqnf}y''(x)+B(x)y'(x)+A(x)y(x)=0,\end{equation}
then the function $Y(z),$ with $Y(z(x))$ given by
$$Y(z(x))=\sqrt{z'(x)}\exp \Big( \frac{1}{2}\int_{0}^{x} B(s) ds \Big) y(x)$$
satisfies the equation in normal form
\begin{equation}\label{Liouville_transform_eq}
	\ddot{Y}(z)+\Omega(z)Y(z)=0.
\end{equation}
Here the dots mean differentiation with respect to $z$ and
\[
	\Omega(z) = \dot{x}(z)^2 C(x(z)) - \dot{x}(z)^{\frac{1}{2}} \frac{d^2}{dz^2} \dot{x}(z)^{-\frac{1}{2}}.
\]
$ \Omega $ can also be written as a function of $ x $, that is we have
$$\Omega(x)=\Omega(z(x))=\frac{1}{d^2(x)}\left[C(x)+\frac{3}{4}\left(\frac{d'(x)}{d(x)}\right)^2-\frac{d''(x)}{2d(x)}\right],$$
where $d(x)=z'(x).$ See \cite{Deano-Gil-Segura} for more details on this Liouville transformation, especially equation (13) and the remark after it. It is important to note that if $ B(x) $ is continuous, which is satisfied for $ a > -\frac{1}{2} $, the transformed function $ Y(x) = Y(z(x)) $ has the same zeros as $ y(x) $.

Now, we consider the change of variable $d(x)=(xu(x))^{-1}$ in the Liouville transformation. In this case the singularities of $\Omega(x)$ will disappear and $\Omega(x)$ will be a
polynomial. Namely, we have that
$$\Omega(x)=d_0x^8+d_1x^7+d_2x^6+d_3x^5+d_4x^4,$$
where
$$d_0=(c^2+d^2)^2,\ \ d_1=2(2a+1)cd(c^2+d^2),$$
$$d_2=-\frac{1}{4}\left(c^4(4a^2-9)+c^2d^2(-8a^2-8a-18)+d^4(4a^2+8a-5)\right),$$
$$d_3=-cd\left(c^2(4a^3+4a^2-7a-4)+d^2(4a^3+8a^2-3a-3)\right),$$
$$d_4=-\frac{1}{4}c^2d^2(16a^4+32a^3+8a^2-8a-3).$$
If $c$ and $d$ have the same sign and $a\in\left(0,\frac{1}{2}\right),$ then the coefficients $d_0,$ $d_1,$ $d_2,$ $d_3$ and $d_4$ are all positive, and consequently under the same assumptions the function $\Omega$ is strictly increasing on $(0,\infty).$ Applying once more the Sturm comparison and convexity theorem (see
\cite[Theorem 1]{Deano-Gil-Segura}) we conclude the convexity of the zeros $z(j_k(a,c,d)).$ 
Since in this case
$$z(x)=-\frac{(c^2+d^2)^2}{c^3d^3(2a+1)^3}\ln\left(c^2+d^2+\frac{cd(2a+1)}{x}\right)+\frac{c^2+d^2}{c^2d^2(2a+1)^2x}-\frac{1}{2cd(2a+1)x^2},$$
the convexity of the sequence $\left\{z(j_k(a,c,d))\right\}_{k\geq1}$ is equivalent to the inequality
$$\frac{\left(1+\frac{\sigma}{j_{k+2}(a,c,d)}\right)\left(1+\frac{\sigma}{j_{k}(a,c,d)}\right)}{\left(1+\frac{\sigma}{j_{k+1}(a,c,d)}\right)^2}
>e^{\sigma\Delta^2 j_k(a,c,d)-\sigma^2\Delta^2 j_k^2(a,c,d)},$$
where
$$\sigma=\frac{(2a+1)cd}{c^2+d^2}$$
and
$$\Delta^2j_k(a,c,d)=\frac{1}{j_{k+2}(a,c,d)}+\frac{1}{j_{k}(a,c,d)}-\frac{2}{j_{k+1}(a,c,d)},$$
which is \eqref{zero_convexity_2}. \\

\textit{(iii) The case when $a\in\left(-\frac{1}{2},0\right]$ and $ c $ has the same sign as $ d $.} Now, we consider the change of variable $d(x)=u(x)^{-1}.$ In this case the singularities of $\Omega(x)$ will disappear and $\Omega(x)$ will be a
polynomial. Namely, we have that
$$\Omega(x)=c_0x^6+c_1x^5+c_2x^4+c_3x^3+c_4x^2,$$
where
$$c_0=(c^2+d^2)^2,\ \ c_1=2(2a+1)cd(c^2+d^2),$$
$$c_2=-\left(c^4(a^2-1)+c^2d^2(-2a^2-2a-2)+d^4(a^2+2a)\right),$$
$$c_3=-cd\left(c^2(4a^3+4a^2-3a-2)+d^2(4a^3+8a^2+a-1)\right),$$
$$c_4=-ac^2d^2(4a^3+8a^2+5a+1).$$
If $c$ and $d$ have the same sign and $a\in\left(-\frac{1}{2},0\right],$ then the coefficients $c_0,$ $c_1,$ $c_2,$ $c_3$ and $c_4$ are all positive, and consequently under the same assumptions the function $\Omega$ is strictly increasing on $(0,\infty).$ Applying again the Sturm comparison and convexity theorem (see
\cite[Theorem 1]{Deano-Gil-Segura}) we conclude the convexity of the zeros $z(j_k(a,c,d)),$ that is, we have
$$z(j_{k+2}(a,c,d)-2z(j_{k+1}(a,c,d))+z(j_{k}(a,c,d))<0.$$
Since in this case
$$z(x)=\frac{c^2+d^2}{c^2d^2(2a+1)^2}\ln\left(c^2+d^2+\frac{cd(2a+1)}{x}\right)-\frac{1}{cd(2a+1)x},$$
the above inequality becomes
$$\frac{\left(1+\frac{\sigma}{j_{k+2}(a,c,d)}\right)\left(1+\frac{\sigma}{j_{k}(a,c,d)}\right)}{\left(1+\frac{\sigma}{j_{k+1}(a,c,d)}\right)^2}
<e^{\sigma\Delta^2 j_k(a,c,d)},$$
which is \eqref{zero_convexity_3}. \\

\textit{(iv) The case when $ a\in\left(-1,-\frac{1}{2}\right)$, $ c $ has the opposite sign as $ d $ and $ d^2 \geq c^2 $.} We consider the change of variable $d(x)=x^3 u(x)^{-1}$ in the Liouville transformation. 
In this case $\Omega(x)$ becomes
$$\Omega(x)=e_0+e_1\frac{1}{x}+e_2\frac{1}{x^2}+e_3\frac{1}{x^3}+e_4\frac{1}{x^4},$$
where
$$e_0=(c^2+d^2)^2,\ \ e_1=2(2a+1)cd(c^2+d^2),$$
$$e_2=-\frac{1}{4}\left(4a^2(c^2-d^2)^2+8ad^2(d^2-c^2)-c^4-2c^2d^2+3d^4\right),$$
$$e_3=-cd(2a+1)\left(2a^2(c^2+d^2)+a(c^2+3d^2)-2c^2-d^2\right),$$
$$e_4=-\frac{1}{4}c^2d^2(2a+1)^2(4a^2+4a-3).$$
Computing the derivative of $\Omega(x)$ we obtain
$$\Omega'(x)=f_0\frac{1}{x^2}+f_1\frac{1}{x^3}+f_2\frac{1}{x^4}+f_3\frac{1}{x^5},$$
where
$$f_0=-2cd(2a+1)(c^2+d^2),$$
$$f_1=\frac{1}{2}\left(c^4(4a^2-1)+c^2d^2(-8a^2-8a-2)+d^4(4a^2+8a+3)\right),$$
$$f_2=3cd(2a+1)\left(c^2(2a^2+a-2)+d^2(2a^2+3a-1)\right),$$
$$f_3=-c^2d^2(2a+1)^2(4a^2+4a-3).$$

If $d^2\geq c^2,$ $cd<0$ and $a\in\left(-1,-\frac{1}{2}\right),$ then the coefficients $f_0,$ $f_1,$ $f_2$ and $f_3$ are all negative, and consequently under the same assumptions the function $\Omega$ is strictly decreasing on $(0,\infty).$ Since $\Omega(x)\to (c^2+d^2)^2$ as $x\to\infty,$ it follows that under the same assumptions $\Omega(x)>(c^2+d^2)^2$ for $x>0.$ Applying again the Sturm comparison and convexity theorem (see \cite[Theorem 1]{Deano-Gil-Segura}) we conclude the concavity of the zeros $z(j_k(a,c,d))$ and the inequality
$$z(j_{k+1}(a,c,d))-z(j_k(a,c,d))<\frac{\pi}{c^2+d^2}.$$
These are equivalent to
$$\frac{(\sigma+j_{k+2}(a,c,d))(\sigma+j_{k}(a,c,d))}{(\sigma+j_{k+1}(a,c,d))^2}<e^{\frac{1}{\sigma}\delta^2j_{k}(a,c,d)},$$
which is \eqref{zero_convexity_4}, where $ \delta^2 j_k(a,c,d) $ is defined by
\begin{equation}\label{zero_comparison_delta}
	\delta^2 j_k(a,c,d)={j_{k+2}(a,c,d)}-2{j_{k+1}(a,c,d)}+j_k(a,c,d),
\end{equation}
and
$$j_{k+1}(a,c,d)-j_{k}(a,c,d)-\sigma\ln\frac{\sigma+j_{k+1}(a,c,d)}{\sigma+j_{k}(a,c,d)}<\pi,$$
which is \eqref{zero_comparison_case_3}. \\
\begin{flushright} $ \Box $ \end{flushright}

For certain subset of parameters, it is possible to provide estimates for the distance of two neighbouring roots. Now we prove Theorem \ref{zero_comparison_theorem}, which is ultimately given by an application of the Sturm comparison theorem.  \\

\noindent \textbf{Proof of Theorem \ref{zero_comparison_theorem}.} \\
\textit{(i) The case when $a\in\left(\frac{1}{2}, \infty \right)$ and $ c $ has the same sign as $ d $.} Observe that $C(x)$ in \eqref{C_definition} can be rewritten as
$$C(x)=\frac{a_0x^4+a_1x^3+a_2x^2+a_3x+a_4}{a_0x^4+a_1x^3+q_2x^2}=1+\frac{1}{x^2}\frac{(a_2-q_2)x^2+a_3x+a_4}{a_0x^2+a_1x+q_2},$$
where $q_2=4(2a+1)^2c^2d^2$ and $$a_2-q_2=-(2a+1)((2a-1)c^4+2(2a+1)c^2d^2+(2a+3)d^4).$$
Thus, if $a\geq \frac{1}{2}$ and $c,$ $d$ have the same sign, then the coefficients $a_2-q_2,$ $a_3$ and $a_4$ are strictly negative, while the
coefficients $a_0,$ $a_1$ and $q_2$ are strictly positive. This implies that under the same conditions we have that $C(x)<1$ for $x>0.$ An alternative way to prove this is to recall that the function $C$ is strictly increasing on $(0,\infty),$ tends to $1$ when $x$ tends to infinity, and tends to $-\infty$ when $x$ tends to zero, and its graph is not crossing its horizontal asymptote $y=1$. Applying the Sturm comparison and convexity theorem (see \cite[Theorem 1]{Deano-Gil-Segura}) we conclude that
$$j_{k+1}(a,c,d)-j_k(a,c,d)>\pi$$ for each $k\in\mathbb{N},$ $a\geq \frac{1}{2}$ and $c,$ $d$ real with the same sign. \\

\textit{(ii) The case when $a \geq 0$, $ c $ has the same sign as $ d $ and $ d^2 \geq c^2 $.} We consider the change of variable $d(x)=u(x)x^{-3}.$ In this case $\Omega(x)$ and $\Omega'(x)$ become
$$\Omega(x)=\frac{4u(x)-(2a+1)(2a^2(c^2+d^2)-c^2+3d^2)x-(2a+1)^3cd}{4((c^2+d^2)x+(2a+1)cd)^3},$$
$$\Omega'(x)=\frac{2a+1}{2((c^2+d^2)x+(2a+1)cd)^4}\Theta(x),$$
where
\begin{align*}\Theta(x)=4cd(c^2+d^2)&x^2+((2a+1)(c^4+6c^2d^2+d^4)+2(d^4-c^4))x\\&+2cd((2a^2+3a+1)c^2+(2a^2+a)d^2)\end{align*}
If $c$ and $d$ have the same sign, $d^2\geq c^2$ and $a\geq0,$ then $\Theta(x)>0$ for each $x>0,$ and thus the function $\Omega$ is monotone increasing on
$(0,\infty).$ Since $\Omega(x)$ tends to $(c^2+d^2)^{-2}$ as $x\to\infty,$ it follows that $\Omega(x)<(c^2+d^2)^{-2}.$ Applying again the Sturm comparison and convexity theorem (see \cite[Theorem 1]{Deano-Gil-Segura}) we conclude the inequalities 
$$z(j_{k+1}(a,c,d))-z(j_k(a,c,d))>(c^2+d^2)\pi$$
and 
$$z(j_{k+2}(a,c,d)-2z(j_{k+1}(a,c,d))+z(j_{k}(a,c,d))<0.$$
Since in this case $z(x)=(2a+1)cd\ln(x)+(c^2+d^2)x,$ the above inequalities are equivalent to
$$\sigma\ln\frac{j_{k+1}(a,c,d)}{j_k(a,c,d)}+{j_{k+1}(a,c,d)}-{j_k(a,c,d)}>\pi$$
and 
$$\sigma\ln\frac{j_{k+2}(a,c,d)j_k(a,c,d)}{j_{k+1}^2(a,c,d)}+\delta^2 j_k(a,c,d)<0,$$
where $ \delta^2 j_k(a,c,d) $ is defined by \eqref{zero_comparison_delta}, which is \eqref{zero_comparison_case_2}. \\

\textit{(iii) The case when $a\in\left(-1,-\frac{1}{2}\right)$, $ c $ has the opposite sign as $ d $ and $ d^2 \geq c^2 $.} For the proof of \eqref{zero_comparison_case_3}, see the proof of \textit{(iv)} in Theorem \ref{zero_convexity_theorem}. As mentioned in the proof, \eqref{zero_comparison_case_3} is equivalent to \eqref{zero_convexity_4}.
\begin{flushright} $ \Box $ \end{flushright}

Although the Liouville transformation combined with Sturm theory yields different inequalities for different domains of parameters, asymptotic results can be established for the complete range of parameters. \\

\noindent \textbf{Proof of Theorem \ref{Bessel_zero_asymptotic_theorem}.} As it was observed in the proof of Theorem \ref{zero_comparison_theorem} (i), the $ C(x) $ defined by \eqref{C_definition} can be rewritten as
\[
	C(x) = 1 + O(x^{-2}).
\]
This means that if $ \varepsilon > 0 $ is arbitrary, we can find an $ x_0 $ such that
\[
	\frac{1}{1 + \varepsilon} \leq C(x) \leq 1 + \varepsilon, \quad x > x_0
\]
holds. Applying the Sturm comparison theorem for $ (x_0, \infty) $, see \cite[Theorem 1]{Deano-Gil-Segura}, we obtain that
\[
	\frac{\pi}{\sqrt{1 + \varepsilon}} < j_{k+1}(a,c,d) - j_k(a,c,d) < \pi \sqrt{1 + \varepsilon}
\]
holds for all $ k $ for which $ j_k(a,c,d) > x_0 $ is satisfied. Since $ \lim_{k \to \infty} j_k(a,c,d) = \infty $, such $ k $ exists, and since $ \varepsilon > 0 $ was arbitrary, \eqref{Bessel_zero_asymptotic_equation} follows.
\begin{flushright} $ \Box $ \end{flushright}

\vspace{1cm}
\'Arp\'ad Baricz \\ \textsc{Institute of Applied Mathematics, \'Obuda University, 1034 Budapest, Hungary }\\ \textsc{Department of Economics, Babe\c{s}-Bolyai University, 400591 Cluj-Napoca, Romania} \\
email: bariczocsi@yahoo.com \\
URL: \textsc{https://sites.google.com/site/bariczocsi/} \\[0.5cm]

\noindent Tivadar Danka \\
\textsc{Bolyai Institute, University of Szeged, 6720 Szeged, Hungary} \\
email: tdanka@math.u-szeged.hu \\
URL: \textsc{http://www.tivadardanka.com}
\end{document}